\newtheorem{thm}{Theorem}[section]
\newtheorem{lem}[thm]{Lemma}
\newtheorem{claim}[thm]{Claim}
\newtheorem{defn}[thm]{Definition}
\newtheorem{ques}[thm]{Question}
\newtheorem{fact}[thm]{Fact}
\newtheorem{cor}[thm]{Corollary}
\def\res{\upharpoonright}
\def\name{\mathring}
\def\dom{\text{dom}}
\def\bb{\mathbb}
\def\cal{\mathcal}
\def\frak{\mathfrak}
\def\e{\varepsilon}
\def\Cohen{\textsf{Cohen}}
\def\Random{\textsf{Random}}
\def\cf{\textsf{cf}}
\def\dom{\textsf{dom}}
\def\rng{\textsf{range}}
\def\supp{\textsf{supp}}
\def\cont{\frak{c}}
\def\otp{\textsf{otp}}
\def\add{\textsf{add}}
\begin{document}

\nocite{*}

\title{Supersaturated ideals}

\author[Kumar]{Ashutosh Kumar}
\address[Kumar]{Department of Mathematics and Statistics, Indian Institute of Technology Kanpur, Kanpur 208016, UP, India.}
\email{\href{krashu@iitk.ac.in}{krashu@iitk.ac.in}}
\urladdr{\url{https://home.iitk.ac.in/~krashu/}}

\author[Raghavan]{Dilip Raghavan}
\address[Raghavan]{Department of Mathematics, National University of Singapore, 10 Lower Kent Ridge Road, Singapore 119076.}
\email{\href{mailto:dilip.raghavan@protonmail.com}{dilip.raghavan@protonmail.com}}
\urladdr{\url{https://dilip-raghavan.github.io/}}
\thanks{Both authors were partially supported by Singapore Ministry of Education's research grant number MOE2017-T2-2-125}

\date{}

\begin{abstract}

A $\sigma$-ideal $\cal{I}$ on a set $X$ is supersaturated if for every family $\cal{F}$ of $\cal{I}$-positive sets with $|\cal{F}| < \add(\cal{I})$,  there exists a countable set that meets every set in $\cal{F}$.  We show that many well-known ccc forcings preserve supersaturation.  We also show that the existence of supersaturated ideals is independent of ZFC plus ``There exists an $\omega_1$-saturated $\sigma$-ideal".

\end{abstract}

\maketitle

\section{Introduction}

Saturation properties of ideals are ubiquitous in modern set theory and there is a considerable body of work (for example, see \cite{Kan, Kun, Lav,  Sol}) on the study of a large number of such properties.  Supersaturation is a strengthening of $\omega_1$-saturation defined as follows.

\begin{defn}
\label{d11}
Let $\cal{I}$ be a $\sigma$-ideal on $X$. We say that $\cal{I}$ is $\theta$-supersaturated iff for every $\cal{A} \subseteq \cal{I}^+$, if $|\cal{A}| < \add(\cal{I})$, then there exists $W \in [X]^{< \theta}$ such that for every $A \in \cal{A}$, $A \cap W \neq \emptyset$. $\cal{I}$ is supersaturated iff it is $\omega_1$-supersaturated.
\end{defn}

Though closely related to some of the works of Fremlin,  supersaturated ideals were formally introduced in \cite{KR}  where it was shown that if $\kappa \leq \cont$ admits a normal supersaturated ideal then the order dimension of the Turing degrees is at least $\kappa$. An earlier motivation for investigating these ideals comes from the following question of Fremlin -- See Problem EG(h) in \cite{Frem}. 

\begin{ques}[Fremlin]
\label{q12}
Suppose $\kappa$ is real valued measurable and $m:\cal{P}(\kappa) \to [0, 1]$ is a witnessing normal measure.  Let $\cal{F}$ be a family of subsets of $\kappa$ such that $|\cal{F}| < \kappa$ and for every $A \in \cal{F}$, $m(A) > 0$. Must there exist a countable $N \subseteq \kappa$ such that for every $A \in \cal{F}$, $N \cap A \neq \emptyset$?
\end{ques}

So Question \ref{q12} is asking if the null ideal of every normal witnessing measure on a real valued measurable cardinal must be supersaturated.  It is easy to see that every supersaturated ideal is $\omega_1$-saturated.  One of the standard ways of obtaining $\omega_1$-saturated ideals on cardinals below the continuum is to start with a measurable cardinal $\kappa$ and a witnessing normal prime ideal $\cal{I}$ on $\kappa$, and force with a ccc forcing $\bb{P}$ that adds $\geq \kappa$ reals. Let $\cal{J}$ be the ideal generated by $\cal{I}$ in $V^{\bb{P}}$. Then $\cal{J}$ is always an $\omega_1$-saturated normal ideal on $\kappa \leq \cont$.  But whether or not $\cal{J}$ is supersaturated will depend on the choice of $\bb{P}$.  This motivates the notion of supersaturation preserving forcings (Definition \ref{d21}).  In Section \ref{s2},  we show that a large class of ccc forcings for adding new reals are supersaturation preserving. In particular,  the following holds.

\begin{thm} 
\label{t13}
Let $\Random_{\lambda}$ denote the forcing for adding $\lambda$ random reals.

\begin{itemize} 
\item[(1)] Every $\sigma$-linked forcing is supersaturation preserving. 

\item[(2)] $\Random_{\lambda}$ is supersaturation preserving for every $\lambda$.

\end{itemize}
\end{thm}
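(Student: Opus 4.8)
The plan is to unwind the definition of supersaturation preservation (Definition~\ref{d21}): given a supersaturated $\sigma$-ideal $\cal{I}$ on $X$ in the ground model $V$, we must show that the $\sigma$-ideal $\cal{J}$ generated by $\cal{I}$ in $V^{\bb{P}}$ is supersaturated. Two routine preliminaries come first. Since $\bb{P}$ is ccc, $\cal{J}=\{Y\subseteq X:(\exists B\in\cal{I}\cap V)(Y\subseteq B)\}$ is a $\sigma$-ideal and $\add(\cal{J})=\add(\cal{I})$ (cardinals and cofinalities are preserved). Again by ccc it suffices, by a density argument, to fix a condition $p_0$, an ordinal $\theta<\add(\cal{I})$, a $V$-sequence of names $\langle\dot{A}_i:i<\theta\rangle$ with $p_0\Vdash ``\dot{A}_i\in\cal{J}^+\text{ for all }i<\theta"$, and to produce a countable $W\subseteq X$, chosen in $V$, with $p_0\Vdash ``W\cap\dot{A}_i\neq\emptyset\text{ for all }i<\theta"$.

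For (1), write $\bb{P}=\bigcup_{n<\omega}\bb{P}_n$ with each $\bb{P}_n$ linked, and for $i<\theta$, $n<\omega$ set, in $V$,
$$B_i^n=\{x\in X:(\exists q\in\bb{P}_n)(q\leq p_0\wedge q\Vdash x\in\dot{A}_i)\}.$$
The basic observation is that for any $q\leq p_0$ the ``trace'' set $D_i^q:=\{x:(\exists s\leq q)(s\Vdash x\in\dot{A}_i)\}$ lies in $\cal{I}^+$: otherwise $q\Vdash\dot{A}_i\subseteq D_i^q\in\cal{J}$, contradicting positivity. Splitting $D_i^q$ according to which piece $\bb{P}_n$ a witness $s$ lies in and using that $\cal{I}$ is a $\sigma$-ideal, some piece already contributes an $\cal{I}$-positive part; in particular, for each $i$ some $B_i^n\in\cal{I}^+$, so $\{B_i^n:B_i^n\in\cal{I}^+\}$ is a family of $\cal{I}$-positive sets of size $\leq\theta<\add(\cal{I})$. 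Applying supersaturation of $\cal{I}$ in $V$ yields a countable $W$ meeting every $\cal{I}$-positive $B_i^n$.

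The verification that $p_0\Vdash W\cap\dot{A}_i\neq\emptyset$ is where the linked decomposition is essential. Given $r\leq p_0$ and $i<\theta$, pick $n^{*}$ with $E:=\{x:(\exists s\leq r)(s\in\bb{P}_{n^{*}}\wedge s\Vdash x\in\dot{A}_i)\}\in\cal{I}^+$; then $E\subseteq B_i^{n^{*}}$, so $B_i^{n^{*}}\in\cal{I}^+$ and $W\cap B_i^{n^{*}}\neq\emptyset$, say $x\in W$ with witness $q\in\bb{P}_{n^{*}}$, $q\leq p_0$, $q\Vdash x\in\dot{A}_i$; also $E\neq\emptyset$, say $s\leq r$, $s\in\bb{P}_{n^{*}}$, $s\Vdash y\in\dot{A}_i$. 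Since $q$ and $s$ lie in the linked set $\bb{P}_{n^{*}}$, they are compatible, and any common extension $t$ satisfies $t\leq s\leq r$ and $t\leq q\Vdash x\in\dot{A}_i$. Hence $\{t\leq p_0:(\exists x\in W)(t\Vdash x\in\dot{A}_i)\}$ is dense below $p_0$, i.e.\ $p_0\Vdash W\cap\dot{A}_i\neq\emptyset$. The \textbf{main point} of (1) is precisely this organization: reading the index $n^{*}$ of the relevant linked piece off an arbitrary $r\leq p_0$ and then glueing the $W$-witness to an extension of $r$ inside that piece. This glueing step is what fails for forcings that are not $\sigma$-linked.

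For (2), $\Random_{\lambda}$ is in general not $\sigma$-linked --- indeed it has no countable dense subset --- so the decomposition into linked pieces must be replaced by a measure-theoretic device. Working in the measure algebra below $[p_0]$ and writing $b_x^i$ for the Boolean value of $x\in\dot{A}_i$, one replaces $B_i^n$ by sets of the form $\{x:\mu(b_x^i\wedge c)\geq\delta\,\mu(c)\}$, where $c$ ranges over a suitable countable family of conditions and $\delta$ over the positive rationals, using that $\bigcup_{\delta}\{x:\mu(b_x^i\wedge c)\geq\delta\,\mu(c)\}\in\cal{I}^+$ below every nonzero $c\leq[p_0]$; the role of ``two conditions in one linked piece are compatible'' is played by the elementary estimate that $q,q'\leq c$ with $\mu(q),\mu(q')>\tfrac12\mu(c)$ satisfy $\mu(q\wedge q')>0$. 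One then applies supersaturation in $V$ and glues as before. The main obstacle --- and where (2) genuinely diverges from (1) --- is the lack of a countable dense set: there is no cheap way to reduce ``for all $r\leq p_0$'' to ``for countably many conditions''. This has to be done by a measure-theoretic exhaustion argument that keeps the parameters $c$ under control, exploiting the homogeneity of the measure algebra and the countable chain condition to confine the relevant part of each name $\dot{A}_i$ to a separable subalgebra; handling names that are ``diffuse'' below some condition, where no single point carries a substantial conditional mass, is the technical crux.
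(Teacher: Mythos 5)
Your treatment of part (1) is correct and is essentially the paper's own argument (Theorem \ref{t27}): the sets $B_i^n$ are the paper's $B_{i,n}$, the trace-set observation is Claim \ref{c28}, and the final compatibility-gluing inside a linked piece is identical. The only cosmetic difference is that you work below an arbitrary $p_0$ while the paper normalizes to the trivial condition.

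Part (2), however, has a genuine gap exactly at the point you yourself flag as ``the technical crux.'' The proposal to replace linkedness by sets of the form $\{x:\mu(b^i_x\wedge c)\ge\delta\,\mu(c)\}$ with $c$ ranging over ``a suitable countable family of conditions'' presupposes that such a countable family exists, and your suggested remedy --- confining the relevant part of each name $\dot A_i$ to a separable subalgebra --- is not available: the Boolean values $[[\alpha\in\dot A_i]]$ for $\alpha<\kappa$ have countable supports $S_{i,\alpha}\subseteq\lambda$ that can be pairwise almost disjoint, so they generate a subalgebra of Maharam type up to $\kappa$, and no exhaustion or ccc argument collapses this to a separable one. (For $\lambda=\omega$ the Lebesgue density theorem does make $\Random_\omega$ $\sigma$-linked and (2) reduces to (1); the whole difficulty is large $\lambda$, and also note that ``no countable dense subset'' does not by itself preclude $\sigma$-linkedness.) The paper's mechanism for crossing this gap has three components that are absent from your sketch: (i) approximate each $[[\alpha\in\dot A_i]]$ by a \emph{clopen} set $K_{i,\alpha,\e}$ of small relative symmetric difference; (ii) use the \emph{normality} of $\cal{I}$ --- a pressing-down argument on $\alpha\mapsto\max(\supp(K_{i,\alpha,\e})\cap\alpha)$ --- to split $\kappa$ mod $\cal{I}$ into countably many $\cal{I}$-positive pieces on each of which the $K_{i,\alpha,\e}$ form a ``strong $\Delta$-system,'' i.e.\ have identical combinatorial pattern over a common finite root; and (iii) prove that for any fixed Baire $p$ the quantities $\mu(p\cap K)$ are eventually within a factor of $2$ of one another across such a $\Delta$-system (Lemma \ref{l25}, proved by reducing $p$ to a clopen set disjoint in support from the moving parts). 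A countable set meeting each piece of each decomposition infinitely often then works. Your sketch never uses normality, which Definition \ref{d21} supplies and which is essential for step (ii); without something playing the role of (ii) and (iii), the claim that one can ``apply supersaturation in $V$ and glue as before'' does not go through.
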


The question of whether every $\omega_1$-saturated ideal must be supersaturated was raised in \cite{KR}.  Our main result shows that this is independent.

\begin{thm}
\label{t14}
Each of the following is consistent.

\begin{itemize}

\item[(1)] There is an $\omega_1$-saturated ideal on a cardinal below the continuum and there are no supersaturated ideals.

\item[(2)] There is an $\omega_1$-saturated ideal on a cardinal below the continuum and every $\omega_1$-saturated ideal is supersaturated.

\end{itemize}
\end{thm}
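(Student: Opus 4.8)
The plan is to obtain both models as set‑generic extensions of a ground model $V$ in which there is a measurable cardinal $\kappa$ and, crucially, the core model $K$ has $\kappa$ as its unique measurable cardinal and is absolute under set forcing; for definiteness take $V=L[U]$, which moreover carries a unique $\kappa$‑complete ultrafilter on $\kappa$. Two preliminary facts set the stage. First, in $V$ the dual ideal $\cal{I}$ of $U$ is supersaturated, since an intersection of fewer than $\kappa$ sets from $\cal{I}^{+}=\cal{I}^{\ast}$ is nonempty, so a single point is a transversal for such a family. Second, a supersaturated ideal $\cal{I}'$ of additivity $>\omega_{1}$ is automatically $\omega_{1}$‑saturated (an $\omega_{1}$‑sized antichain of positive sets would be of size $<\add(\cal{I}')$, hence would admit a countable transversal meeting its pairwise disjoint members in distinct points); and conversely, in any set‑generic extension of $V$, an $\omega_{1}$‑saturated $\sigma$‑ideal $\cal{I}'$ is precipitous, its generic ultrapower $j$ satisfies $\mathrm{crit}(j)=\add(\cal{I}')$ (for $\leq$ apply $j$ to a partition of the underlying set into $\add(\cal{I}')$‑many null pieces; for $\geq$ use that, by $\omega_{1}$‑saturation, any function into an ordinal $<\add(\cal{I}')$ is constant modulo $\cal{I}'$ on a generic set), and since the ideal induced on $\mathrm{crit}(j)$ is precipitous, $\mathrm{crit}(j)$ is measurable in $K=L[U]$. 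Hence in any set‑generic extension of $V$ the additivity of an $\omega_{1}$‑saturated $\sigma$‑ideal is either $\omega_{1}$ or $\kappa$ (possibly after $\kappa$ has been collapsed), and in the latter case the ideal is $\kappa$‑complete. Since ideals of additivity $\omega_{1}$ are supersaturated for free (one point per set), part~(2) reduces to forcing \emph{every} $\omega_{1}$‑saturated $\sigma$‑ideal to have additivity $\omega_{1}$, while part~(1) reduces to forcing \emph{every} $\kappa$‑complete $\sigma$‑ideal to fail supersaturation (whence there is no supersaturated ideal of uncountable additivity, which is the content of the clause).

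For part~(2), force over $V$ with $\mathrm{Coll}(\omega,{<}\kappa)$ and then with $\mathrm{Fn}(\lambda,2)$ for some $\lambda\geq\omega_{2}$, obtaining $V[H]$ with $\kappa=\omega_{1}^{V[H]}$ and $\cont^{V[H]}=\lambda$. By classical results (Solovay, Kunen) on Levy‑collapsing a cardinal carrying a $\kappa^{+}$‑saturated $\kappa$‑complete ideal, $\omega_{1}$ carries an $\omega_{1}$‑saturated $\sigma$‑ideal in $V^{\mathrm{Coll}(\omega,{<}\kappa)}$, and it remains $\omega_{1}$‑saturated after the ccc Cohen forcing; this ideal lives on $\omega_{1}<\cont$, which yields the first conjunct. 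For the second, the dichotomy of the first paragraph applies with $K=L[U]$, whose unique measurable has become $\omega_{1}$ in $V[H]$; so every $\omega_{1}$‑saturated $\sigma$‑ideal in $V[H]$ has additivity $\omega_{1}$ and is therefore supersaturated.

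For part~(1), keep $\kappa$ measurable and force over $V$ with a ccc poset $\bb{Q}$ of size $\kappa^{+}$ that adds $\kappa^{+}$ reals, so that in $V^{\bb{Q}}$ we have $\kappa<\cont=\kappa^{+}$ and, by the Introduction, $\cal{J}$ (the ideal generated by $\cal{I}$) is an $\omega_{1}$‑saturated normal ideal on $\kappa$. The goal is to choose $\bb{Q}$ so that $\cal{J}$ is not supersaturated: $\bb{Q}$ should generically add a family $\langle A_{\xi}\colon\xi<\omega_{1}\rangle$ of subsets of $\kappa$, each meeting every ground‑model $U$‑set (hence $\cal{J}$‑positive), such that no countable $W\subseteq\kappa$ meets all the $A_{\xi}$; and the family should, more generally, defeat supersaturation of every $\kappa$‑complete $\sigma$‑ideal $\cal{I}'$ in $V^{\bb{Q}}$ — one shows, using the rigidity of $U$ in $L[U]$, that $\cal{I}'$‑positivity of suitable preimages of the $A_{\xi}$ transfers to each such $\cal{I}'$, so that by the dichotomy there are then no supersaturated ideals. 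Since $\cal{I}$ is supersaturated in $V$, Theorem~\ref{t13} forces $\bb{Q}$ to be ccc but neither $\sigma$‑linked nor a random algebra, so it must be built by hand: the intended $\bb{Q}$ is a finite‑condition forcing in the style of the standard ccc posets for adding Luzin‑type or Hausdorff‑gap‑type objects — a condition being a finite approximation to the $\omega_{1}\times\kappa$ incidence matrix of $\langle A_{\xi}\rangle$ together with a finite packet of ``promises'' restricting future membership so as to preclude transversals — with ccc proved by a $\Delta$‑system argument on the promise packets.

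The hard part is the construction of $\bb{Q}$ in part~(1). It must be ccc (to keep $\kappa$ a cardinal and the $\omega_{1}$‑saturation of all the generated ideals) yet, by Theorem~\ref{t13}, cannot be $\sigma$‑linked or a random algebra, so the promise mechanism has to be simultaneously strong enough to annihilate every countable transversal of $\langle A_{\xi}\rangle$ and loose enough that any uncountable family of conditions contains a compatible pair; engineering the promises and proving the associated $\Delta$‑system lemma is where the real work lies, as is the verification that the obstruction is effective against every $\kappa$‑complete $\sigma$‑ideal and not merely $\cal{J}$. For part~(2) the only nontrivial input beyond the Solovay–Kunen construction and standard saturated‑ideal theory is the core‑model absoluteness used in the dichotomy — that no set forcing over $L[U]$ can endow a cardinal other than (the image of) $\kappa$ with a precipitous, hence with an $\omega_{1}$‑saturated, ideal — together with the routine preservation of $\omega_{1}$‑saturation under ccc forcing.
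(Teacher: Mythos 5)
Your plan for part (2) fails at the first conjunct. After $\mathrm{Coll}(\omega,{<}\kappa)$ the Solovay--Kunen ideal induced on $\kappa=\omega_1$ is $\omega_2$-saturated, \emph{not} $\omega_1$-saturated: by Ulam matrices, no uniform $\sigma$-complete ideal on $\omega_1$ (or on any successor cardinal) can be $\omega_1$-saturated. The same obstruction destroys your reduction: for any nontrivial $\omega_1$-saturated $\sigma$-ideal $\cal{J}$, the induced ideal $\{\Gamma\subseteq\mu:\bigcup_{\alpha\in\Gamma}W_\alpha\in\cal{J}\}$ on $\mu=\add(\cal{J})$ is a uniform $\mu$-complete $\omega_1$-saturated ideal, so $\mu$ is weakly inaccessible and in particular $\mu>\omega_1$. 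Hence ``every $\omega_1$-saturated ideal has additivity $\omega_1$'' can only hold vacuously, i.e.\ exactly when the first conjunct fails, and the observation that additivity-$\omega_1$ ideals are supersaturated for free buys nothing. The paper's route is entirely different: it keeps $\kappa$ measurable (least such, with $\cont=\omega_1$ in $V$) and forces with $\Cohen_\kappa$; the work is in Lemmas \ref{l32}--\ref{l34}, which reduce to uniform ideals of homogeneous additivity, reflect any $\kappa$-complete $\omega_1$-saturated ideal of the extension to a ground-model ideal that is prime on each piece of a countable partition, and then prove by a direct combinatorial analysis of Cohen names (supports, isomorphism types, and the meagerness of the sets $B_{i,\alpha}$) that the ideal generated by a $\kappa$-complete ultrafilter stays supersaturated. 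None of this is replaced by your core-model dichotomy, which in any case is asserted rather than proved.

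For part (1) you have correctly guessed the shape of the answer --- a hand-built ccc, non-$\sigma$-linked forcing adding an $\omega_1$-family of positive sets with no countable transversal --- but you have not produced the forcing, and you say explicitly that engineering the promises and the $\Delta$-system lemma ``is where the real work lies.'' That is the entire content of Section \ref{s4}, so this is a gap, not a proof. Moreover the direction you indicate (finite conditions with a \emph{finite} packet of promises) cannot work: to defeat a countable transversal $X$ one must be able to promise $0$ on the countably infinite set of blocks $W_\alpha$ met by $X$, so the conditions must carry infinitely many negative commitments. The paper's $\bb{Q}^{\kappa}_{\delta}$ does exactly this --- countable partial functions with domain of order type $<\delta$ and only finitely many $1$'s, indexed over all indecomposable $\delta<\omega_1$ --- and the order-type stratification is precisely what reconciles infinitely many promises with the ccc (Lemma \ref{l42}). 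Finally, a single step does not suffice: the paper iterates $\bb{P}_{\kappa}$ for $\kappa^{+}$ stages with finite support and uses a reflection argument (via Fact \ref{f44} and Theorem \ref{t43}) to catch every candidate supersaturated ideal of the final model at some intermediate stage; your single poset $\bb{Q}$ of size $\kappa^{+}$ would still have to handle ideals definable only from the whole generic, and you give no argument for that.
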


\textbf{Notation}:  Let $\cal{I}$ be an ideal on $X$.  Define $\cal{I}^+ = \cal{P}(X) \setminus \cal{I}$.  $\add(\cal{I})$ denotes the least cardinality of a subfamily of $\cal{I}$ whose union is in $\cal{I}^+$.  For $A \subseteq X$, define $\cal{I} \res A = \{Y \subseteq X: Y \cap A \in \cal{I}\}$. For a set of ordinals $X$, $\otp(X)$ denotes the order type of $X$.  An ordinal $\delta$ is indecomposable iff for every $X \subseteq \delta$, either $\otp(X) = \delta$ or $\otp(\delta \setminus X) = \delta$. If $\bb{P}$, $\bb{Q}$ are forcing notions, we write $\bb{P} \lessdot \bb{Q}$ iff $\bb{P} \subseteq \bb{Q}$ and every maximal antichain in $\bb{P}$ is also a maximal antichain in $\bb{Q}$.  $\Cohen_{\lambda}$ denotes the forcing for adding $\lambda$ Cohen reals.  $\Random_{\lambda}$ is the measure algebra on $2^{\lambda}$ equipped with the usual product measure denoted by $\mu_{\lambda}$. If $\lambda$ is clear from the context, then we drop it and just write $\mu$.

\section{CCC forcings and supersaturation}
\label{s2}

\begin{defn}
\label{d21}
A forcing $\bb{P}$ is $\kappa$-ssp (ssp = supersaturation preserving) iff for every normal supersaturated ideal $\cal{I}$ on $\kappa$, $V^{\bb{P}} \models $ ``the ideal generated by $\cal{I}$ is supersaturated".  $\bb{P}$ is ssp iff it is $\kappa$-ssp for every $\kappa$.

\end{defn}

In \cite{KR}, the following forcings were shown to be $\kappa$-ssp for every $\kappa$.

\begin{itemize}
\item[(a)] $\Cohen_{\lambda}$ for any $\lambda$.
\item[(b)] Any finite support iteration of ccc forcings of size $< \kappa$. 
\end{itemize}

It was also shown that $\Random_{\lambda}$ is $\kappa$-ssp for any measurable $\kappa$. The next theorem improves this to all $\kappa$. 

\begin{thm}
\label{t22}
$\Random_{\lambda}$ is $\kappa$-ssp for every $\kappa$ and $\lambda$. 
\end{thm}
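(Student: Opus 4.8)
The plan is to reformulate the statement inside the measure algebra $\Random_\lambda$ and to produce the required countable set by a Lebesgue density argument, using supersaturation of $\mathcal{I}$ in the ground model to make the argument uniform over the whole family. Working below a condition that decides all relevant parameters, it suffices to prove the following forcing-free statement in $V$: if $\mathcal{I}$ is normal and supersaturated on $\kappa$, $\delta < \add(\mathcal{I})$, and $D^\xi_\alpha \in \Random_\lambda$ (for $\xi < \delta$, $\alpha < \kappa$) satisfy $\bigvee_{\alpha \notin Y} D^\xi_\alpha = 1$ for every $\xi < \delta$ and every $Y \in \mathcal{I}$, then there is a countable $W \subseteq \kappa$ with $\bigvee_{\alpha \in W} D^\xi_\alpha = 1$ for all $\xi < \delta$. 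Taking $D^\xi_\alpha = \| \check{\alpha} \in \dot{A}_\xi \|$, the hypothesis says exactly that each $\dot{A}_\xi$ is forced $\mathcal{J}$-positive, the conclusion gives $\Vdash \dot{A}_\xi \cap W \neq \emptyset$, and since $\Random_\lambda$ is ccc one checks $\add(\mathcal{J}) = \add(\mathcal{I})$, so $\dot{\delta} < \add(\mathcal{J})$ transfers to $\delta < \add(\mathcal{I})$ (and if $\add(\mathcal{I}) = \omega_1$ the problem is trivial, so one may assume $\add(\mathcal{I}) \geq \omega_2$). Since a measure-one element of $\Random_\lambda$ is the unit, it is further enough to produce, for each fixed rational $\epsilon > 0$, a countable $W_\epsilon$ with $\mu\bigl(\bigvee_{\alpha \in W_\epsilon} D^\xi_\alpha\bigr) \geq 1 - \epsilon$ for every $\xi$; applying this with $\epsilon = 2^{-n}$ and setting $W = \bigcup_n W_{2^{-n}}$ then finishes the proof.

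Fix $\epsilon$. For a basic clopen $[s] \subseteq 2^\lambda$ put $B^\xi_s = \{ \alpha < \kappa : \mu(D^\xi_\alpha \cap [s]) > (1-\epsilon)\mu([s]) \}$, and call $[s]$ \emph{good} (for $\xi$) if $B^\xi_s \in \mathcal{I}^+$. The heart of the matter is to show that for each $\xi$ the good clopen sets cover $2^\lambda$ modulo null, and, refining this, that one can extract a countable pairwise disjoint family $\{ [s^\xi_i] : i < \omega \}$ of good clopen sets with $\sum_i \mu([s^\xi_i]) = 1$. Granting this, the family $\{ B^\xi_{s^\xi_i} : \xi < \delta,\ i < \omega \}$ consists of $\mathcal{I}$-positive sets and has size $\leq \delta < \add(\mathcal{I})$, so supersaturation of $\mathcal{I}$ yields a countable $W_\epsilon \subseteq \kappa$ meeting every $B^\xi_{s^\xi_i}$; fixing $\xi$ and choosing $\alpha_i \in W_\epsilon \cap B^\xi_{s^\xi_i}$, the sets $D^\xi_{\alpha_i} \cap [s^\xi_i]$ are disjoint with $\mu([s^\xi_i] \setminus D^\xi_{\alpha_i}) < \epsilon \mu([s^\xi_i])$, whence $\mu\bigl(\bigvee_{\alpha \in W_\epsilon} D^\xi_\alpha\bigr) \geq 1 - \epsilon \sum_i \mu([s^\xi_i]) = 1 - \epsilon$, as required. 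That the good clopen sets cover modulo null is proved by contradiction: if $q = 2^\lambda \setminus \bigcup\{\text{good } [s]\}$ were positive, then $\bigvee_{\alpha \notin Y}(D^\xi_\alpha \wedge q) = q$ for every $Y \in \mathcal{I}$, so one finds $\alpha \notin Y$ with $\mu(D^\xi_\alpha \wedge q) > 0$ and, applying the Lebesgue density theorem to $D^\xi_\alpha \cap q$, a clopen $[s]$ with $\mu([s] \cap q) > 0$ on which $D^\xi_\alpha$ has relative measure $> 1-\epsilon$, so $\alpha \in B^\xi_s$; since $[s]$ meets $q$ it is not good, so choosing $Y$ to absorb all such $B^\xi_s$ gives $\alpha \in Y$, a contradiction.

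The genuine obstacle is that the set $Y$ just invoked is a union of $B^\xi_s$ over all non-good clopen $[s]$ meeting $q$, which is a union of as many as $|\lambda|$ members of $\mathcal{I}$, and $\add(\mathcal{I})$ gives no control over such a union. I would circumvent this by first passing to an elementary submodel $N \prec H_\theta$ with $|N| < \add(\mathcal{I})$, $\delta \subseteq N$, and $\mathcal{I}, \lambda, \langle \dot{A}_\xi : \xi < \delta \rangle \in N$, chosen along an increasing chain so that $v := \lambda \cap N$ is closed under enough Skolem functions; one then restricts throughout to clopen $[s]$ with $\dom(s) \subseteq v$, so that the offending $Y$ becomes a union of only $|v| < \add(\mathcal{I})$ members of $\mathcal{I}$ and hence lies in $\mathcal{I}$, while elementarity guarantees that for each $\xi$ and each $Y' \in \mathcal{I} \cap N$ the $D^\xi_\alpha$ that are supported on $v$ and avoid $Y'$ still have supremum $1$ --- exactly what is needed to run the density computation inside the countably generated subalgebra of $\Random_\lambda$ determined by the (necessarily countable) support of the sets in question. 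Arranging such a $v$ and reflecting the covering statements correctly despite $v \notin N$, together with extracting the disjoint subcover of good clopen sets (a Vitali-type covering argument, delicate because basic clopen sets in $2^\lambda$ do not nest), are the two points where I expect the real effort; the remainder is bookkeeping with supports and density points.
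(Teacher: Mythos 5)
Your overall scheme --- reduce to producing, for each rational $\varepsilon>0$, a countable $W_\varepsilon$ with $\mu\bigl(\bigvee_{\alpha\in W_\varepsilon}D^\xi_\alpha\bigr)\geq 1-\varepsilon$ for every $\xi$, and obtain $W_\varepsilon$ by applying supersaturation to countably many $\mathcal{I}$-positive sets per $\xi$ --- is sound, but the central covering claim is false, and the real difficulty is not the one you flagged. Take $\lambda=\kappa$ and $D^0_\alpha=\{x\in 2^{\kappa}:x(\alpha)=1\}$. For every $Y\in\mathcal{I}$ the set $\kappa\setminus Y$ is infinite, so $\bigvee_{\alpha\notin Y}D^0_\alpha=1$ and this is a legitimate instance of your reformulated statement (indeed any infinite countable $W$ works for it). Yet for $\varepsilon<1/2$ and any clopen $[s]$ we have $\mu(D^0_\alpha\cap[s])=\mu([s])/2$ unless $\alpha\in\dom(s)$, so $B^0_s\subseteq\dom(s)$ is finite, hence in $\mathcal{I}$: there are \emph{no} good clopen sets, and they certainly do not cover $2^{\lambda}$ modulo null. (A law-of-large-numbers computation shows the same even if you allow arbitrary positive Baire sets in place of clopen ones.) Your Lebesgue density step does produce, for each $\alpha$, a clopen $[s_\alpha]$ on which $D^0_\alpha$ has relative measure $>1-\varepsilon$, but $[s_\alpha]$ must involve the coordinate $\alpha$ and so moves with $\alpha$; no single clopen set collects $\mathcal{I}$-positively many $\alpha$'s. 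The elementary-submodel repair makes this worse, not better: for $\mathcal{I}$-almost every $\alpha$ the support of $D^\xi_\alpha$ meets $\lambda\setminus v$, and such a $D^\xi_\alpha$ can have relative measure exactly $1/2$ on every clopen set supported in $v$, so restricting to $v$-supported clopen sets kills the density argument entirely rather than rescuing the union-of-ideal-sets step.

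What is missing is a mechanism for comparing the traces of a fixed positive $p$ on clopen sets whose supports vary with $\alpha$. The paper supplies exactly this: each $p_{i,\alpha}$ is approximated by its own clopen set $K_{i,\alpha,\varepsilon}$ with small relative symmetric difference; normality of $\mathcal{I}$ (pressing down on $\max(\supp(K_{i,\alpha,\varepsilon})\cap\alpha)$) splits $T_i$ into countably many $\mathcal{I}$-positive pieces on each of which the $K_{i,\alpha,\varepsilon}$ form a strong $\Delta$-system; and Lemma \ref{l25} shows that along such a system the quantity $\mu(p\cap K)$ is essentially constant for any fixed Baire $p$. Supersaturation is applied to these countably many pieces per $(i,\varepsilon)$, and the eventual-constancy lemma transfers a lower bound on $\mu(p\cap p_{i,\alpha})$ from an $\mathcal{I}$-positive set of $\alpha$'s to some $\alpha$ in the chosen countable set, giving the contradiction. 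Note that your argument never uses normality of $\mathcal{I}$, whereas the paper's proof uses it essentially at the pressing-down step; since the theorem is only asserted for normal supersaturated ideals, that is a further sign that the approach as written cannot be complete.
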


\begin{proof}

Fix a normal supersaturated ideal $\cal{I}$ on $\kappa$. Put $\bb{B} = \Random_{\lambda}$ and let $\cal{J}$ be the ideal generated by $\cal{I}$ in $V^{\bb{B}}$. Suppose $\theta < \kappa$ and $\Vdash_{\bb{B}}  \langle \name{A}_i: i < \theta \rangle$ is a sequence of $\cal{J}$-positive sets. It suffices to find $B \in [\kappa]^{\aleph_0}$ such that $\Vdash_{\bb{B}} (\forall i < \theta)(\name{A}_i \cap B \neq \emptyset)$. \\

For $i < \theta$ and $\alpha < \kappa$, put $p_{i, \alpha} = [[\alpha \in \name{A}_i]]_{\bb{B}}$. So each $p_{i, \alpha}$ is a Baire subset of $2^{\lambda}$. Put $T_i = \{\alpha < \kappa: p_{i, \alpha} \neq 0_{\bb{B}}\}$.

\begin{claim}
\label{c23}
For each $p \in \bb{B} \setminus \{0_{\bb{B}}\}$, $\{\alpha \in T_i: p_{i, \alpha} \cap p \neq 0_{\bb{B}} \} \in \cal{I}^+$.
\end{claim}
\begin{proof} Put $X_p = \{\alpha \in T_i: p_{i, \alpha} \cap p \neq 0_{\bb{B}} \}$ and suppose $X_p \in \cal{I}$. Since the empty condition forces that $\name{A}_i \in \cal{J}^+$, it follows that for every $X \in \cal{I}$, $\{ p_{i, \alpha} : \alpha \in T_i \setminus X\}$ is predense in $\bb{B}$. But every condition in $\{ p_{i, \alpha} : \alpha \in T_i \setminus X_p\}$ is incompatible with $p$ which is impossible.
\end{proof}

For a finite partial function $f$ from $\lambda$ to $2$, define $[f] = \{x \in 2^{\lambda}: x \res \dom(f) = f\}$. For a clopen $K \subseteq 2^{\lambda}$, define $\supp(K)$ to be the smallest finite set $S \subseteq \lambda$ such that $(\forall x, y \in 2^{\lambda})(x \res S = y \res S \implies (x \in K \iff y \in K))$. If $\supp(K) = S$, then there there is finite list $\{f_{K, n}: n < n_{\star}\}$ where $f_{K, n}$'s are pairwise distinct functions from $S$ to $2$ and $K = \bigsqcup_{n < n_{\star}} [f_{K, n}]$.

\begin{defn}
\label{d24}
Suppose $\cal{C}$ is a family of clopen sets in $2^{\lambda}$. We say that $\cal{C}$ is a strong $\Delta$-system of width $(n_{\star}, N_{\star})$ iff $n_{\star}, N_{\star} < \omega$ and the following hold.

\begin{itemize}

\item[(a)] $\langle \supp(K): K \in \cal{C} \rangle$ is a $\Delta$-system with root $R$.

\item[(b)] For every $K \in \cal{C}$, $|\supp(K) \setminus R| = n_{\star}$.

\item[(c)] For every $K \in \cal{C}$, $K = \bigsqcup_{n < N_{\star}} [f_{K, n}]$ where each $f_{K, n}: \supp(K) \to 2$ and $f_{K, n}$'s are pairwise distinct.

\item[(d)] For every $K_1, K_2 \in \cal{C}$ and $n < N_{\star}$, 

\subitem (i) $f_{K_1, n} \res R = f_{K_2, n} \res R$ and

\subitem (ii) if for $m \in \{1, 2\}$, $\{\xi^m_j : j < |R| + n_{\star}\}$ lists $\supp(K_m)$ in increasing order, then $f_{K_1, j}(\xi^1_j) = f_{K_2, j}(\xi^2_j)$ for every $j < |R| + n_{\star}$.

\end{itemize}
 
\end{defn}

\begin{lem}
\label{l25}
Suppose $p \subseteq 2^{\lambda}$ is Baire and $\cal{C}$ is an infinite strong $\Delta$-system of clopen sets in $2^{\lambda}$ of width $(n_{\star}, N_{\star})$. Let $\e > 0$ and assume that for infinitely many $K \in \cal{C}$, $\mu(p \cap K) \geq \e$. Then for all but finitely many $K \in \cal{C}$, $\mu(p \cap K) \geq \e/2$.
\end{lem}

\begin{proof}
Let $R$ be the root of $\langle \supp(K): K \in \cal{C}\rangle$.  For each $K \in \cal{C}$, fix $\langle f_{K, n}: n < N_{\star} \rangle$ such that $K = \bigsqcup_{n < N_{\star}} [f_{K, n}]$.  First suppose that $p$ is clopen. Let $\cal{C}_p = \{K \in \cal{C}: (\supp(K) \setminus R) \cap \supp(p) = \emptyset\} $. Then $\cal{C} \setminus \cal{C}_p$ is finite and for each $K \in \cal{C}_p$, $$\mu(p \cap K) = \sum_{n < N_{\star}} \mu(p \cap [f_{K, n}]) = 2^{- n_{\star}} \sum_{n < N_{\star}} \mu(p \cap [f_{K, n} \res R])$$

which does not depend on $K \in \cal{C}_p$. It follows that the result holds if $p$ is clopen. The general case follows by applying the previous case to a clopen $q \subseteq 2^{\lambda}$ satisfying $\mu(p \Delta q) < \e/2$. \end{proof}

For each $\alpha \in T_i$, fix $S_{i, \alpha} \in [\lambda]^{\aleph_0}$ such that $p_{i, \alpha}$ is supported in $S_{i, \alpha}$.  For every $i < \theta$, $\alpha \in T_i$ and $\e > 0$ rational, choose a clopen set $K_{i, \alpha, \e} \subseteq 2^{\lambda}$ with $\supp(K_{i, \alpha, \e}) \subseteq S_{i, \alpha}$ such that $$\frac{\mu(p_{i, \alpha} \Delta K_{i, \alpha, \e})}{\mu(K_{i, \alpha, \e})} < \e $$

\begin{claim}
\label{c26}
For each $i < \theta$ and $\e > 0$ rational, we can find $\cal{F}_{i, \e} \subseteq \cal{I}^+$ and $\langle (n_{i, \e, Y}, N_{i, \e, Y}): Y \in \cal{F}_{i, \e} \rangle$ such that the following hold.

\begin{itemize}

\item[(1)] $\cal{F}_{i, \e}$ is a countable family of pairwise disjoint sets and $T_i \setminus \bigcup \cal{F}_{i, \e} \in \cal{I}$. 

\item[(2)] For each $Y \in \cal{F}_{i, \e}$, $\{ K_{i, \alpha, \e}: \alpha \in Y\}$ is a strong $\Delta$-system of width $(n_{i, \e, Y}, N_{i, \e, Y})$.

\end{itemize} 
\end{claim} 

\begin{proof}
Fix $i < \theta$ and $\e > 0$ rational. To simplify notation, we write $K_{\alpha}$ instead of $K_{i, \alpha, \e}$. It suffices to show that for every $\cal{I}$-positive $X \subseteq T_i$, there exists $Y \subseteq X$ such that $Y \in \cal{I}^+$ and there exist $(n_Y, N_Y)$ such that $\{ K_{\alpha}: \alpha \in Y\}$ is a strong $\Delta$-system of width $(n_Y, N_Y)$. Since then we can take $\cal{F}_{i, \e}$ to be a maximal disjoint family of such $Y$'s. 

Fix a club $E \subseteq \kappa$ such that for every $\gamma \in E$ and $\alpha \in T_i \cap \gamma$, $\max(\supp(K_{\alpha})) < \gamma$. Suppose $X \subseteq T_i \cap E$ and $X \in \cal{I}^+$. Since $\cal{I}$ is normal and the map $\alpha \mapsto \max(\supp(K_{\alpha} \cap \alpha))$ is regressive on $X$, we can find $R \subseteq \kappa$ finite and $Y_1 \subseteq X$ such that $Y_1 \in \cal{I}^+$, $(\forall \alpha \in Y_1)(\supp(K_{\alpha}) \cap \alpha = R)$ and $|\supp(K_{\alpha}) \setminus R| = n_{\star}$ does not depend on $\alpha \in Y_1$. It also follows that $\langle \supp(K_{\alpha}): \alpha \in Y_1 \rangle$ forms a $\Delta$-system with root $R$. For each $\alpha \in Y_1$, let $K_{\alpha} = \bigsqcup_{n < N_{\alpha}} [f_{\alpha, n}]$ where each $f_{\alpha, n}:\supp(K_{\alpha}) \to 2$. Choose $Y_2 \subseteq Y_1$ such that $Y_2 \in \cal{I}^+$ and $N_{\alpha} = N_{\star}$ does not depend on $\alpha \in Y_2$. Finally, choose $Y \subseteq Y_2$ such that $Y \in \cal{I}^+$ and $\{ K_{\alpha}: \alpha \in Y\}$ is a strong $\Delta$-system of width $(n_{\star}, N_{\star})$. \end{proof}
 
Since $\cal{I}$ is supersaturated, we can choose $B \in [\kappa]^{\aleph_0}$ such that for every $i < \theta$, $\e > 0$ rational and $Y \in \cal{F}_{i, \e}$,  we have $|B \cap Y| = \aleph_0$.  It suffices to show that for each $i < \theta$, $\{p_{i, \alpha}: \alpha \in B\}$ is predense in $\bb{B}$.  \\

Suppose not. Fix $i < \theta$ and $p \subseteq 2^{\lambda}$ Baire such that $\mu(p) > 0$ and for every $\alpha \in B$, $\mu(p_{i, \alpha} \cap p) = 0$. Let $X = \{\alpha \in T_i: \mu(p_{i, \alpha} \cap p) > 0\}$. By Claim \ref{c23}, $X \in \cal{I}^+$. Using the argument in the proof of Claim \ref{c26}, we can choose $\e > 0$ rational, $X_{\star} \subseteq X$ and $n_{\star}, N_{\star} < \omega$ such that

\begin{itemize}

\item[(a)] $X_{\star} \in \cal{I}^+$ and for each $\alpha \in X_{\star}$, $\mu(p_{i, \alpha} \cap p) \geq 4\e$.

\item[(b)] $\{ K_{i, \alpha, \e}: \alpha \in X_{\star}\}$ is a strong $\Delta$-system of width $(n_{\star}, N_{\star})$.

\end{itemize}

Choose $Y \in \cal{F}_{i, \e}$ such that $Y \cap X_{\star} \in \cal{I}^+$. Since $|Y \cap X_{\star}| \geq \aleph_0$ and $|Y \cap B| = \aleph_0$, by Lemma \ref{l25}, we can choose $\alpha \in Y \cap B$ such that $\mu(p \cap K_{i, \alpha, \e}) \geq 2\e$. But since $\mu(p_{i, \alpha} \Delta K_{i, \alpha, \e}) \leq \e \mu(K_{i, \alpha, \e}) \leq \e$, it follows that $\mu(p \cap p_{i, \alpha}) \geq \e > 0$: Contradiction. This completes the proof of Theorem \ref{t22}. \end{proof}

\begin{thm}
\label{t27}
Every $\sigma$-linked forcing is $\kappa$-ssp for every $\kappa$.
\end{thm}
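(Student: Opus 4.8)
The plan is to follow the scheme of the proof of Theorem~\ref{t22}: extract from the names $\name{A}_i$ a small family of $\cal{I}$-positive subsets of $\kappa$, use supersaturation of $\cal{I}$ to choose a single countable $B$ meeting every member of that family, and then check that $B$ meets every $\name{A}_i$ in every generic extension. The countable linked decomposition of $\bb{P}$ will play the role that the strong $\Delta$-system analysis played in the case of $\Random_{\lambda}$; since no measures are involved, the argument is shorter.

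Fix a normal supersaturated ideal $\cal{I}$ on $\kappa$, a decomposition $\bb{P} = \bigcup_{n < \omega} \bb{P}_n$ into linked sets, and let $\cal{J}$ be the ideal generated by $\cal{I}$ in $V^{\bb{P}}$. As in the proof of Theorem~\ref{t22}, fix $\theta < \kappa$ and a sequence of names $\langle \name{A}_i : i < \theta \rangle$ that is forced to consist of $\cal{J}$-positive subsets of $\kappa$; it suffices to find $B \in [\kappa]^{\aleph_0}$ such that $\Vdash_{\bb{P}} (\forall i < \theta)(\name{A}_i \cap B \neq \emptyset)$. For $i < \theta$, $n < \omega$ and $p \in \bb{P}$, put
$$R_{i,n} = \{\alpha < \kappa : (\exists q \in \bb{P}_n)(q \Vdash \alpha \in \name{A}_i)\} \quad\text{and}\quad S_{i,p} = \{\alpha < \kappa : (\exists q \leq p)(q \Vdash \alpha \in \name{A}_i)\}.$$
The analogue of Claim~\ref{c23} is that $S_{i,p} \in \cal{I}^+$ for all $i$ and $p$: otherwise $S_{i,p} \in \cal{I} \subseteq \cal{J}$, while $p \Vdash \name{A}_i \subseteq S_{i,p}$ (no extension of $p$ forces an ordinal outside $S_{i,p}$ into $\name{A}_i$), contradicting $p \Vdash \name{A}_i \in \cal{J}^+$. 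Let $N_i = \{n < \omega : R_{i,n} \in \cal{I}^+\}$. Then $\{R_{i,n} : i < \theta,\ n \in N_i\}$ is a family of $\cal{I}$-positive sets of size less than $\add(\cal{I})$, so supersaturation of $\cal{I}$ yields $B \in [\kappa]^{\aleph_0}$ with $B \cap R_{i,n} \neq \emptyset$ for every $i < \theta$ and every $n \in N_i$.

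To verify that $B$ is as required, it suffices to show that for each $i < \theta$ the set $D_i = \{q \in \bb{P} : (\exists \alpha \in B)(q \Vdash \alpha \in \name{A}_i)\}$ is dense. Fix $p \in \bb{P}$ and write $S_{i,p} = \bigcup_{n < \omega} S_{i,p,n}$, where $S_{i,p,n} = \{\alpha < \kappa : (\exists q \leq p)(q \in \bb{P}_n \text{ and } q \Vdash \alpha \in \name{A}_i)\}$. Since $S_{i,p} \in \cal{I}^+$ and $\cal{I}$ is $\sigma$-complete, we may fix $n < \omega$ with $S_{i,p,n} \in \cal{I}^+$. As $S_{i,p,n} \neq \emptyset$, fix $r_0 \leq p$ with $r_0 \in \bb{P}_n$; and since $S_{i,p,n} \subseteq R_{i,n}$, we have $R_{i,n} \in \cal{I}^+$, that is, $n \in N_i$, so there is $\alpha \in B \cap R_{i,n}$, witnessed by some $r_1 \in \bb{P}_n$ with $r_1 \Vdash \alpha \in \name{A}_i$. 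As $r_0$ and $r_1$ lie in the linked set $\bb{P}_n$, they have a common extension $q$; then $q \leq r_0 \leq p$ and $q \leq r_1 \Vdash \alpha \in \name{A}_i$ with $\alpha \in B$, so $q \in D_i$ and $q \leq p$. Hence $D_i$ is dense, and the theorem follows.

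I expect the verification step to be the only delicate point. The naive attempt (fix for each relevant $\alpha$ a single condition forcing $\alpha \in \name{A}_i$ and record the linked class it lies in, thereby colouring these ordinals with countably many colours, then apply supersaturation to the $\cal{I}$-positive colour classes) does not work, because the linked class $\bb{P}_n$ obtained from a hypothetical counterexample $p$ need not be the class of $p$, so the witnessing conditions for members of $B$ need not be compatible with $p$. The correct move, used above, is to first refine to an $\cal{I}$-positive set $S_{i,p,n}$ of ordinals that are forced into $\name{A}_i$ by conditions lying below $p$ \emph{and} inside a single linked class $\bb{P}_n$: this simultaneously forces $n \in N_i$ (so that $B$ meets $R_{i,n}$) and supplies a condition $r_0 \leq p$ inside $\bb{P}_n$ through which the chosen member of $B$ can be pushed into $\name{A}_i$.
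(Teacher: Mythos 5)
Your proof is correct and follows essentially the same route as the paper: the sets $R_{i,n}$ are the paper's $B_{i,n}$, your density argument for $D_i$ (decomposing $S_{i,p}$ into the pieces $S_{i,p,n}$ and using $\sigma$-completeness to find a positive one) is exactly the content of the paper's Claim~\ref{c28} that $W_i = \bigcup\{L_n : B_{i,n} \in \cal{I}^+\}$ is dense, and the final step via linkedness of $\bb{P}_n$ is identical. The only difference is presentational (direct density verification versus the paper's argument by contradiction from a condition forcing $X \cap \name{A}_i = \emptyset$).
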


\begin{proof}

Let $\cal{I}$ be a normal supersaturated ideal on $\kappa$. Suppose $\bb{P}$ is a $\sigma$-linked forcing and $\cal{J}$ is the ideal generated by $\cal{I}$ in $V^{\bb{P}}$. Fix $\theta < \kappa$ and WLOG, assume that the trivial condition forces that $\langle \name{A}_i : i < \theta \rangle$ is a sequence of $\cal{J}$-positive sets. It suffices to construct $X \in [\kappa]^{\aleph_0}$ such that $\Vdash_{\bb{P}} (\forall i < \theta)(X \cap \name{A}_i \neq \emptyset)$. \\

Since $\bb{P}$ is $\sigma$-linked, we can write $\bb{P} = \bigcup \{L_n: n < \omega\}$ where each $L_n \subseteq \bb{P}$ has pairwise compatible members. For each $i < \theta$ and $n < \omega$, define

$$B_{i, n} = \{\alpha < \kappa: (\exists p \in L_n)(p \Vdash \alpha \in \name{A}_i)\}$$

\begin{claim}
\label{c28}
$W_i = \bigcup \{L_n: n < \omega,  B_{i, n} \in \cal{I}^+ \}$ is dense in $\bb{P}$.
\end{claim}

\begin{proof} Suppose not and fix $p \in \bb{P}$ such that no extension of $p$ lies in $W_i$. Put $C = \{\alpha < \kappa: (\exists q \leq p)(q \Vdash \alpha \in \name{A}_i)\}$. Since no extension of $p$ lies in $W_i$, it follows that $C \subseteq \bigcup \{B_{i, n}: n < \omega, B_{i, n} \in \cal{I}\}$ and hence $C \in \cal{I}$. It now follows that $p \Vdash \name{A}_i \in \cal{J}$ which is impossible. \end{proof}

Since $\cal{I}$ is supersaturated, we can find a countable $X \subseteq \kappa$ such that for every $i < \theta$ and $n < \omega$, if $B_{i, n} \in \cal{I}^+$, then $X \cap B_{i, n} \neq \emptyset$. We claim that $\Vdash (\forall i < \theta)(X \cap \name{A}_i \neq \emptyset)$. Suppose not and fix $p \in \bb{P}$ and $i < \theta$ such that $p \Vdash X \cap \name{A}_i = \emptyset$. Using Claim \ref{c28}, choose $n < \omega$ and $p' \leq p$ such that $p' \in L_n$ and $B_{i, n} \in \cal{I}^+$. Choose $\alpha \in B_{i, n} \cap X$ and $q \in L_n$ such that $q \Vdash \alpha \in \name{A}_i$. Since $L_n$ is linked, we can find a common extension $r \in \bb{P}$ of $p', q$. But $r \Vdash \alpha \in X \cap \name{A}_i$: Contradiction.
\end{proof}

\begin{cor}
\label{c29}
Each of the following forcings is ssp: Cohen, random, Amoeba,  Hechler,  Eventually different real forcing.
\end{cor}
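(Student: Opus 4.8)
The plan is to obtain Corollary \ref{c29} by matching each forcing on the list to one of the two preceding theorems. Random real forcing is $\Random_\omega$, so it is covered directly by Theorem \ref{t22} (which in fact handles $\Random_\lambda$ for every $\lambda$). For the remaining four forcings the strategy is the same in every case: exhibit a $\sigma$-linked decomposition and invoke Theorem \ref{t27}. Three of the four will even turn out to be $\sigma$-centered.

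First, Cohen forcing: $\Cohen_\lambda$ was already shown to be $\kappa$-ssp for all $\kappa$ in \cite{KR}, and in any case the Cohen real forcing $2^{<\omega}$ is countable, hence trivially $\sigma$-centered, so Theorem \ref{t27} applies. Next, Hechler forcing $\bb{D}$, whose conditions are pairs $(s,f)$ with $s\in\omega^{<\omega}$, $f\in\omega^\omega$ and $s\subseteq f$, and the eventually different real forcing $\bb{E}$, whose conditions are pairs $(s,F)$ with $s\in\omega^{<\omega}$ and $F\subseteq\omega^\omega$ finite: in each case I would group conditions by their stem $s$. Any two conditions with the same stem are compatible --- $(s,f)$ and $(s,g)$ have common extension $(s,\max(f,g))$ in $\bb{D}$, and $(s,F)$ and $(s,G)$ have common extension $(s,F\cup G)$ in $\bb{E}$ --- so, there being only countably many stems, both forcings are $\sigma$-centered and Theorem \ref{t27} gives that they are ssp.

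The one forcing needing an argument rather than a one-line observation is the amoeba forcing $\bb{A}$, consisting of open $p\subseteq 2^\omega$ with $\mu(p)<\tfrac12$ ordered by inclusion. Here I would show $\bb{A}$ is $\sigma$-linked by bucketing according to a clopen core. For each clopen $C\subseteq 2^\omega$ with $\mu(C)<\tfrac12$, set $L_C=\{p\in\bb{A}:C\subseteq p\text{ and }\mu(p\setminus C)<\tfrac12(\tfrac12-\mu(C))\}$. There are only countably many clopen sets, and every $p\in\bb{A}$ belongs to some $L_C$, because clopen subsets of $p$ have measures approaching $\mu(p)<\tfrac12$ from below, so one may pick clopen $C\subseteq p$ with $\mu(p\setminus C)$ as small as needed. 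And each $L_C$ is linked: if $p,p'\in L_C$ then
\[
\mu(p\cup p')\le\mu(C)+\mu(p\setminus C)+\mu(p'\setminus C)<\mu(C)+\left(\tfrac12-\mu(C)\right)=\tfrac12,
\]
so $p\cup p'$ is again a condition in $\bb{A}$ extending both. Thus $\bb{A}$ is $\sigma$-linked and Theorem \ref{t27} applies, completing the proof. I expect the amoeba case to be the only mild obstacle, and even there the content is just choosing the buckets correctly; it is worth recording that $\bb{A}$ is not $\sigma$-centered (for $p_1,\dots,p_k\in L_C$ the bound on $\mu(p_1\cup\cdots\cup p_k)$ deteriorates to $\mu(C)+\tfrac{k}{2}(\tfrac12-\mu(C))$, which surpasses $\tfrac12$ once $k$ is large), so using Theorem \ref{t27} in its $\sigma$-linked form is genuinely needed.
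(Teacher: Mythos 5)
Your proposal is correct and follows exactly the route the paper intends: the corollary is stated without proof as an immediate consequence of Theorems \ref{t22} and \ref{t27}, and you have supplied the standard witnessing decompositions (stems for Cohen, Hechler and the eventually different forcing; clopen cores for amoeba; Theorem \ref{t22} for random). The only quibble is your parenthetical claim that amoeba forcing is not $\sigma$-centered: the failure of your particular bound for $k$ conditions does not by itself rule out some other centered decomposition, but this aside plays no role in the proof.
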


We do not know if we can improve Theorem \ref{t27} to the class of $\sigma$-finite-cc forcings. For example, one can ask the following.

\begin{ques}
\label{q210}
Suppose $\bb{B}$ is a boolean algebra and $m:\bb{B} \to [0, 1]$ is a strictly positive finitely additive measure on $\bb{B}$. Must $\bb{B}$ be supersaturation preserving?
\end{ques}

The next two facts are well known.

\begin{fact}
\label{f211}
Suppose $\bb{P}$ is a separative $\sigma$-linked forcing. Then $|\bb{P}| \leq \cont$.
\end{fact}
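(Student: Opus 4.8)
The statement to prove is Fact \ref{f211}: a separative $\sigma$-linked forcing $\bb{P}$ has size at most $\cont$. The plan is to exploit the separativity to embed $\bb{P}$ into $\cal{P}(\omega)$ (modulo a suitable identification) using the linked decomposition. Write $\bb{P} = \bigcup_{n < \omega} L_n$ where each $L_n$ is linked. To each condition $p \in \bb{P}$ associate the set $\sigma(p) = \{n < \omega : (\exists q \in L_n)(q \leq p)\} \subseteq \omega$. Actually, the right invariant is finer: first associate to $p$ the function $f_p : \omega \to \{0,1,2\}$ (or a pair of subsets of $\omega$) recording, for each $n$, whether $L_n$ contains a condition below $p$, whether $L_n$ contains a condition incompatible with $p$, and so on. The key claim is that the map $p \mapsto (\text{this invariant})$ is injective, which by separativity reduces to: if $p \not\leq q$ then there is some $L_n$ witnessing a difference.

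First I would make precise which invariant works. Given $p \neq q$ in a separative $\bb{P}$, separativity gives (WLOG) some $r \leq p$ with $r \perp q$. Pick $n$ with $r \in L_n$; then $L_n$ contains a condition below $p$ but, since $L_n$ is linked and $r \in L_n$ is incompatible with $q$, no condition of $L_n$ lies below $q$ (anything below $q$ would be compatible with $r$). Hence the set $D_p = \{n < \omega : (\exists s \in L_n)(s \leq p)\}$ satisfies $n \in D_p \setminus D_q$. This shows $p \mapsto D_p$ is injective on $\bb{P}$: if $p \neq q$, then either $D_p \neq D_q$ directly, or the symmetric argument with the roles of $p,q$ reversed produces the difference. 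So $|\bb{P}| \leq |\cal{P}(\omega)| = \cont$.

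The main step to get right is the argument that $n \in D_p$ forces $n \notin D_q$ under the hypothesis "$r \leq p$, $r \in L_n$, $r \perp q$": one needs that every element of $L_n$ is compatible with $r$ (that is exactly linkedness of $L_n$), so an element of $L_n$ below $q$ would be a common lower bound of something $\leq q$ and $r$, contradicting $r \perp q$ only if that common bound witnesses compatibility of $q$ and $r$ — which it does, since $s \leq q$ and $s$ compatible with $r$ gives $q$ compatible with $r$. I should double-check the direction of separativity being invoked: separativity says precisely that $p \not\leq q$ implies there is $r \leq p$ with $r \perp q$, which is what we use. The only mild subtlety is handling the case $p \not\leq q$ and $q \not\leq p$ simultaneously versus $p \leq q$ but $q \not\leq p$; in all cases $p \neq q$ yields $p \not\leq q$ or $q \not\leq p$, and either alternative produces an element of the symmetric difference of $D_p$ and $D_q$. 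I expect no real obstacle here — this is a short, clean argument — so the writeup would be just a few lines recording the injectivity of $p \mapsto D_p$.
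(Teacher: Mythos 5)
Your proof is correct: the map $p \mapsto D_p = \{n < \omega : (\exists s \in L_n)(s \leq p)\}$ is injective by exactly the separativity-plus-linkedness argument you give, so $|\bb{P}| \leq |\cal{P}(\omega)| = \cont$. The paper states this fact without proof as well known, and yours is the standard argument, so there is nothing to compare beyond noting that your final writeup should drop the speculative detour about a finer $\{0,1,2\}$-valued invariant, which is not needed.
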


\begin{fact}
\label{f212}
Let $\langle (\bb{P}_{\xi}, \name{\bb{Q}}_{\xi}): \xi < \lambda \rangle$ be a finite support iteration with limit $\bb{P}_{\lambda}$ where for every $\xi < \lambda$, $V^{\bb{P}_{\xi}} \models \name{\bb{Q}}_{\xi}$ is $\sigma$-linked. Assume $\lambda < \cont^+$. Then $\bb{P}_{\lambda}$ is also $\sigma$-linked.
\end{fact}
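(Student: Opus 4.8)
The plan is to prove this directly, without induction on $\lambda$, using the Hewitt--Marczewski--Pondiczery theorem on separability of products. For each $\xi < \lambda$ I would fix a $\bb{P}_\xi$-name $\langle \name{L}^\xi_m : m < \omega \rangle$ forced to witness that $\name{\bb{Q}}_\xi$ is $\sigma$-linked. Let $D$ be the set of $q \in \bb{P}_\lambda$ such that for every $\xi \in \supp(q)$ there is $m$ with $q \res \xi \Vdash q(\xi) \in \name{L}^\xi_m$; for such $q, \xi$ write $m^q_\xi$ for the least such $m$. First I would check that $D$ is dense: given $q_0$, repeatedly take the largest not-yet-decided coordinate $\xi$ of the current condition, extend the part strictly below $\xi$ so as to decide the index of the value at $\xi$, and add to the to-do list the finitely many new coordinates thereby created (all lying below $\xi$); this terminates because the largest not-yet-decided coordinate strictly decreases at each step. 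Since $D$ is dense, it suffices to partition $D$ into countably many subsets each linked in $\bb{P}_\lambda$: any such partition extends to all of $\bb{P}_\lambda$ by sending each $q \notin D$ to the class of some $q' \in D$ with $q' \leq q$, and this keeps the classes linked.

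The heart of the argument is the colouring. I would give $\omega^\lambda$ the product topology ($\omega$ discrete). Since $\lambda < \cont^+$, that is $|\lambda| \leq \cont$, the Hewitt--Marczewski--Pondiczery theorem gives a countable dense set $\{ y_k : k < \omega \} \subseteq \omega^\lambda$. For $q \in D$ the set $V_q = \{ y \in \omega^\lambda : (\forall \xi \in \supp(q))\ y(\xi) = m^q_\xi \}$ is a nonempty basic open set, as only the finitely many coordinates in $\supp(q)$ are constrained; so there is a least $k(q)$ with $y_{k(q)} \in V_q$, and I would colour each $q \in D$ by $k(q)$. This uses only countably many colours.

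It then remains to show that each colour class $\{ q \in D : k(q) = n \}$ is linked in $\bb{P}_\lambda$. Given $p, q \in D$ with $k(p) = k(q)$, the witness $y_{k(p)}$ lies in $V_p \cap V_q$, so $m^p_\xi = m^q_\xi$ for every $\xi \in \supp(p) \cap \supp(q)$. I would build a common extension $r$ of $p$ and $q$ by recursion on $\xi$ ranging over $\supp(p) \cup \supp(q)$ in increasing order (with $r(\xi)$ the trivial value for other $\xi$), keeping $r \res \xi \leq p \res \xi$ and $r \res \xi \leq q \res \xi$: if $\xi$ lies in only one of the two supports, let $r(\xi)$ be the corresponding value; if $\xi \in \supp(p) \cap \supp(q)$, then since $r \res \xi$ extends both $p \res \xi$ and $q \res \xi$ and $p, q \in D$, it forces $p(\xi), q(\xi) \in \name{L}^\xi_m$ with $m = m^p_\xi = m^q_\xi$, hence $r \res \xi \Vdash p(\xi) \parallel q(\xi)$ because $\name{L}^\xi_m$ is forced to be linked, so by the maximal principle there is a name $\name{s}$ with $r \res \xi \Vdash \name{s} \leq p(\xi)$ and $r \res \xi \Vdash \name{s} \leq q(\xi)$; put $r(\xi) = \name{s}$. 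Then $r \leq p$ and $r \leq q$, so $p \parallel q$.

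The step I expect to be the real obstacle is keeping the number of colours countable: the support of a condition can use any of the $\lambda$-many coordinates, and $\lambda$ may have size $\cont$, so the naive invariant recording which $\name{L}^\xi_m$ is used at each coordinate of $\supp(q)$ ranges over a set of size $2^{|\lambda|}$. The hypothesis $\lambda < \cont^+$ is used precisely here, through Hewitt--Marczewski--Pondiczery, which collapses this to countably many classes while retaining the one property the amalgamation needs, namely that two conditions in the same class assign the same $\name{L}$-index at each of their common coordinates. The remaining points---density of $D$, the fact that a poset with a $\sigma$-linked dense suborder is $\sigma$-linked, and that in a finite support iteration compatibility of two conditions is witnessed coordinate by coordinate on the union of their supports---are routine.
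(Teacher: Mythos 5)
Your proof is correct. The paper states Fact \ref{f212} as well known and gives no proof, so there is nothing to compare against, but your argument is the standard one: the density of the set $D$ of conditions whose linked-piece indices are decided along their supports, the reduction to colouring a dense suborder, and the use of the Hewitt--Marczewski--Pondiczery theorem (which is exactly where $\lambda<\cont^+$ enters) to get countably many colour classes that agree on indices at common coordinates, followed by the coordinatewise amalgamation. All the steps, including the top-down termination argument for density and the mixing argument at common coordinates, check out.
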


\begin{thm}
\label{c213}
Let $\cal{I}$ be a normal supersaturated ideal on $\kappa$ and let $\lambda \leq \kappa^+$. Suppose $\langle (\bb{P}_{\xi}, \name{\bb{Q}}_{\xi}): \xi < \lambda \rangle$ is a finite support iteration with limit $\bb{P}_{\lambda}$ where for every $\xi < \lambda$, $V^{\bb{P}_{\xi}} \models \name{\bb{Q}}_{\xi}$ is $\sigma$-linked. Let $\cal{J}$ be the ideal generated by $\cal{I}$ in $V^{\bb{P}_{\lambda}}$. Then $\cal{J}$ is supersaturated.
\end{thm}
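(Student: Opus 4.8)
The plan is to reduce to the two cases already handled in the paper, exploiting the length restriction $\lambda \leq \kappa^+$ and Fact \ref{f211}. Fix $\mathcal{A} \subseteq \mathcal{J}^+$ in $V^{\bb{P}_\lambda}$ with $|\mathcal{A}| < \add(\mathcal{J})$; I must produce a countable $W \subseteq \kappa$ meeting every set in $\mathcal{A}$. Since $\add(\mathcal{J}) = \add(\mathcal{I})$ (the generated ideal over a ccc extension has the same additivity, as $\mathcal{I}$ is $\kappa$-complete and $\kappa > \omega_1$), and since $\bb{P}_\lambda$ is ccc, we may name $\mathcal{A} = \langle \name{A}_i : i < \theta \rangle$ for some $\theta < \kappa$, with the trivial condition forcing each $\name{A}_i$ to be $\mathcal{J}$-positive; it suffices to find $W \in [\kappa]^{\aleph_0}$ in $V$ with $\Vdash_{\bb{P}_\lambda} (\forall i < \theta)(W \cap \name{A}_i \neq \emptyset)$.

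First I would dispose of the case $\lambda \leq \kappa$, or more precisely $\lambda < \kappa^+$ with $|\bb{P}_\lambda| < \kappa$: here $\bb{P}_\lambda$ is a finite support iteration of ccc forcings of size $< \kappa$, hence $\kappa$-ssp by item (b) from \cite{KR} recalled in the excerpt, and we are done. So the real content is the case $\lambda = \kappa^+$ (with $\kappa = \cont$ in the ground model, which is forced by the hypotheses since $\sigma$-linked forcings have size $\leq \cont$ and a length-$\kappa^+$ iteration of nontrivial ones makes $\cont \geq \kappa^+ > \kappa$ — wait, one must be slightly careful, but in any case $\kappa \leq \cont$ always holds when $\mathcal{I}$ is supersaturated). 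The strategy for this case is a reflection/chain-condition argument: by Fact \ref{f212}, each $\bb{P}_\xi$ for $\xi < \kappa^+$ is $\sigma$-linked of size $\leq \cont$. By ccc-ness, each name $\name{A}_i$ and each antichain witnessing $\mathcal{J}$-positivity is determined by countably many conditions, so there is a set $S \in [\kappa^+]^{\leq \theta \cdot \omega}$, hence $|S| < \kappa$, such that all the relevant data live in $\bb{P}_S := \bigcup_{\xi \in S} \bb{P}_\xi$; taking $S$ closed under the iteration's bookkeeping we get $\bb{P}_S \lessdot \bb{P}_{\kappa^+}$ and $\bb{P}_S$ is a finite support iteration of $\sigma$-linked forcings of length $<\kappa$, in particular $\sigma$-linked of size $< \kappa$. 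Now $\name{A}_i$ is (forced to be) a $\mathcal{J}_S$-positive set in $V^{\bb{P}_S}$ where $\mathcal{J}_S$ is the ideal generated by $\mathcal{I}$ there; applying the $\lambda < \kappa$ case (or directly Theorem \ref{t27}, since $\bb{P}_S$ is $\sigma$-linked) inside $V$, we obtain a countable $W \subseteq \kappa$ with $\Vdash_{\bb{P}_S} (\forall i < \theta)(W \cap \name{A}_i \neq \emptyset)$. Because $\bb{P}_S \lessdot \bb{P}_{\kappa^+}$ and the $\name{A}_i$ are $\bb{P}_S$-names, this statement is absolute to $V^{\bb{P}_{\kappa^+}}$, giving the conclusion.

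The main obstacle I anticipate is the extraction of the small complete suborder $\bb{P}_S$: one needs $S \subseteq \kappa^+$ simultaneously (i) large enough to capture all countably-many-conditions-worth of data for each of the $\theta < \kappa$ names $\name{A}_i$ and a predense set below every condition witnessing each Claim-\ref{c28}-style density requirement, (ii) closed under the iteration's support bookkeeping so that $\bb{P}_S = \bb{P}_{\otp(S)}$ (after transitive collapse) really is an initial segment of a finite-support iteration of $\sigma$-linked forcings, and (iii) of size $< \kappa$ so that item (b) of \cite{KR} or Theorem \ref{t27} applies. Arranging (i) and (ii) together is a standard but slightly delicate Löwenheim–Skolem construction: build an increasing $\omega$-chain of sets $S_n$, at stage $n+1$ throwing in supports of the finitely-many conditions needed to resolve commitments generated at stage $n$, and let $S = \bigcup_n S_n$; since $\theta < \kappa$ and $\kappa$ is regular (indeed $\kappa$ carries a $\kappa$-complete ideal), $|S| < \kappa$ is automatic. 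One delicate point worth double-checking is that $\mathcal{J}_S$ positivity of $\name{A}_i$ in $V^{\bb{P}_S}$ is preserved — this follows because $\bb{P}_S \lessdot \bb{P}_{\kappa^+}$ implies $\mathcal{J}$ restricted to $V^{\bb{P}_S}$ is exactly $\mathcal{J}_S$ and positivity is downward absolute. Everything else is bookkeeping.
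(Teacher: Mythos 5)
Your overall strategy (reduce to iterations short enough for Theorem \ref{t27} or for item (b) of the list recalled from \cite{KR}) is the right one and is essentially the paper's, but two steps in your execution fail. First, the reflection to a set $S$ of size $<\kappa$: a name $\name{A}_i$ for a subset of $\kappa$ is not ``determined by countably many conditions'' --- it is built from $\kappa$ many countable antichains (one deciding $\alpha \in \name{A}_i$ for each $\alpha < \kappa$), so the set of coordinates it uses has size $\kappa$, not $<\kappa$. Moreover, for a finite support \emph{iteration} (unlike a product) the restriction $\bb{P}_S$ to an arbitrary $S \subseteq \kappa^+$ need not be a complete suborder, and closing $S$ under the supports of the names $\name{\bb{Q}}_\xi$ (which may be names of size $\cont \cdot |\xi|$ using cofinally many coordinates below $\xi$) can force $|S| \geq \kappa$; your claim that $|S| < \kappa$ ``is automatic'' is unjustified. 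The correct move, and the one the paper makes, is simpler: the union of all supports occurring in the $\theta < \kappa$ names has size $\leq \kappa < \kappa^+$, hence is bounded in $\kappa^+$, so the $\name{A}_i$ are $\bb{P}_\eta$-names for some \emph{initial segment} $\eta < \kappa^+$; one then applies the induction hypothesis (equivalently, Fact \ref{f212} plus Theorem \ref{t27}) to $\bb{P}_\eta$, and no sub-iteration extraction is needed. Note also that the countable set need only exist in $V[G_\eta]$, not in $V$.

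Second, your parenthetical dismissal of the case $\kappa > \cont$ --- ``$\kappa \leq \cont$ always holds when $\cal{I}$ is supersaturated'' --- is false: the dual of a normal measure on a measurable cardinal $\kappa > \cont$ is trivially supersaturated, and this is precisely a situation the theorem must cover. When $\kappa > \cont$, Fact \ref{f212} only yields $\sigma$-linkedness of $\bb{P}_\xi$ for $\xi < \cont^+$, which may be far below $\kappa^+$, so your appeal to Theorem \ref{t27} for all $\bb{P}_\xi$ with $\xi < \kappa^+$ breaks down. The paper handles this case separately: for $\lambda \leq \kappa$ each iterand has size $<\kappa$ by Fact \ref{f211} and Theorem 4.9 of \cite{KR} applies, while for $\kappa < \lambda \leq \kappa^+$ one first passes to $V^{\bb{P}_\kappa}$, where $\cont \geq \kappa$ because Cohen reals are added at stages of cofinality $\omega$, and only then runs the $\kappa \leq \cont$ argument.
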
 

\begin{proof}

By induction on $\lambda$. First suppose $\kappa \leq \cont$. If $\lambda < \kappa^+$, then by Fact \ref{f212}, $\bb{P}_{\lambda}$ is $\sigma$-linked and the claim holds by Theorem  \ref{t27}. So assume $\lambda = \kappa^+$ and fix any $\bb{P}_{\lambda}$-generic filter $G_{\lambda}$ over $V$. Let $\langle A_i : i < \theta \rangle$ be a sequence of $\cal{J}$-positive sets in $V[G_{\lambda}]$ where $\theta < \kappa$. Since $\bb{P}_{\lambda}$ is a finite support iteration of ccc forcings, there exists $\eta < \lambda = \kappa^+$ such that $\langle A_i : i < \theta \rangle \in V[G_{\eta}]$ where $G_{\eta} = \bb{P}_{\eta} \cap G_{\lambda}$. Note that each $A_i$ is $\cal{J}_{\eta}$-positive where $\cal{J}_{\eta}$ is the ideal generated by $\cal{I}$ in $V[G_{\eta}]$. By inductive hypothesis, there is a countable set that meets $A_i$ for every $i < \theta$. Hence $\cal{J}$ is supersaturated. 

Next assume $\kappa > \cont$. Then $\kappa$ is measurable and $\cal{I}$ is a normal prime ideal on $\kappa$. First suppose $\lambda \leq \kappa$. By Fact \ref{f211}, $|\bb{P}_{\xi}| \leq |\xi \cdot \cont| < \kappa$ for every $\xi < \kappa$. Hence by Theorem 4.9 in \cite{KR}, it follows that $\cal{J}$ is supersaturated. Next suppose $\kappa < \lambda \leq \kappa^+$. Note that $V^{\bb{P}_{\kappa}} \models \cont \geq \kappa$ since Cohen reals are added at each stage of cofinality $\omega$. So we can work in $V^{\bb{P}_{\kappa}}$ and repeat the argument for the case $\kappa \leq \cont$. 
\end{proof}

It is now natural to ask the following.

\begin{ques} [\cite{KR}]
\label{q214}
Suppose $\kappa$ is measurable. Is every ccc forcing $\kappa$-ssp?
\end{ques}

In Section \ref{s4}, we'll show that the answer is negative. We end this section with the following weaker positive result.

\begin{thm}
\label{t215}
Suppose $\kappa$ is measurable and $\cal{I}$ is a normal prime ideal on $\kappa$. Let $\bb{B}$ be a ccc complete boolean algebra. Then $V^{\bb{B}} \models$ ``the ideal generated by $\cal{I}$ is $\omega_2$-supersaturated."
\end{thm}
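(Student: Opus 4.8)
Let me think about this. We have $\kappa$ measurable, $\mathcal{I}$ a normal prime ideal on $\kappa$, and $\mathbb{B}$ a ccc complete boolean algebra. We want to show that in $V^{\mathbb{B}}$, the ideal $\mathcal{J}$ generated by $\mathcal{I}$ is $\omega_2$-supersaturated. That means: whenever $\langle \dot{A}_i : i < \theta \rangle$ is forced to be a sequence of $\mathcal{J}$-positive sets with $\theta < \add(\mathcal{J}) = \kappa$, we need a set $W$ of size $\leq \aleph_1$ (in the extension, but we can aim for it in the ground model) meeting every $\dot{A}_i$.

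The approach: fix $\theta < \kappa$ and names $\dot{A}_i$. For each $i < \theta$ and $\alpha < \kappa$, let $p_{i,\alpha} = [[\alpha \in \dot{A}_i]]_{\mathbb{B}} \in \mathbb{B}$, and $T_i = \{\alpha : p_{i,\alpha} \neq 0\}$. As in Claim \ref{c23}, for each nonzero $p \in \mathbb{B}$, the set $\{\alpha \in T_i : p_{i,\alpha} \wedge p \neq 0\}$ is $\mathcal{I}$-positive — indeed since $\mathcal{I}$ is prime, it is $\mathcal{I}$-measure-one on a set in the ultrafilter. So each $T_i$ is in the ultrafilter $\mathcal{I}^*$. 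Now the key point is that $\mathbb{B}$ is ccc: any maximal antichain is countable. The plan is to build, for each $i < \theta$, a countable "skeleton" that captures the predensity structure. For each $i$, since $\{p_{i,\alpha} : \alpha \in T_i\}$ is predense below every condition, fix a countable maximal antichain $\{q_{i,n} : n < \omega\}$ refining it, and for each $n$ pick $\alpha_{i,n} \in T_i$ with $q_{i,n} \leq p_{i,\alpha_{i,n}}$. Then $X_0 = \{\alpha_{i,n} : i < \theta, n < \omega\}$ has size $\leq \theta \cdot \aleph_0 < \kappa$. This is not yet countable, which is why we only get $\omega_2$-supersaturation rather than supersaturation.

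To cut down from $< \kappa$ to $\leq \aleph_1$: here is where I'd use the measurability of $\kappa$ more seriously, together with the fact that $\add(\mathcal{I}) = \kappa$ and $\mathcal{I}$ is normal and prime. One natural move: iterate the skeleton construction $\omega_1$ many times, or rather, use a reflection/elementary-submodel argument. Take an elementary submodel $M \prec H(\chi)$ of size $\aleph_1$ with $\omega_1 \subseteq M$, containing $\mathbb{B}$, $\mathcal{I}$, $\langle \dot{A}_i : i < \theta\rangle$, $\theta$, and closed enough (say $M^\omega \subseteq M$, possible since $|M| = \aleph_1$ and we may assume $\aleph_1^{\aleph_0} = \aleph_1$ — hmm, that needs CH; better to just take $|M| = \aleph_1$ with $\omega_1 \subseteq M$ and $M$ countably closed, which requires $\aleph_1^{\aleph_0} = \aleph_1$, or alternatively argue directly). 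Set $W = M \cap \kappa \in [\kappa]^{\leq\aleph_1}$. I claim $\Vdash_{\mathbb{B}} (\forall i < \theta)(\dot{A}_i \cap W \neq \emptyset)$. Suppose not: some $p \in \mathbb{B}$, $p \neq 0$, and $i < \theta$ force $\dot{A}_i \cap W = \emptyset$, i.e.\ $p \leq -p_{i,\alpha}$ for all $\alpha \in W$. Since $\mathbb{B}$ is ccc and $\{p_{i,\alpha} : \alpha \in T_i\}$ is predense, there is a \emph{countable} $Z \subseteq T_i$ with $\{p_{i,\alpha} : \alpha \in Z\}$ predense below $p$. The data needed to specify such a $Z$ — namely $p$, $i$, and the countable antichain — lies in $H(\chi)$, and we can ask $M$ to reflect it: by elementarity, there is such a countable $Z$ with $Z \in M$, hence (by countable closure or just $\omega_1 \subseteq M$ and countability) $Z \subseteq M \cap \kappa = W$. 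But then $\{p_{i,\alpha} : \alpha \in Z\}$ is predense below $p$ while $p \leq -p_{i,\alpha}$ for all $\alpha \in Z \subseteq W$, contradiction. The measurability of $\kappa$ is used to guarantee $\theta < \kappa = \add(\mathcal{I})$ is a real constraint and, more importantly, via $\mathcal{I}$ being prime, that $T_i$ and its positive relatives behave well; but actually the core argument here is just ccc plus elementarity, which is why it works for arbitrary ccc $\mathbb{B}$.

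The main obstacle I anticipate is the set-theoretic bookkeeping around the size of $W$: getting $W \in [\kappa]^{\leq\aleph_1}$ in a way that is robust requires either a countably closed elementary submodel of size $\aleph_1$ (which may need a cardinal-arithmetic hypothesis like $\aleph_1^{\aleph_0} = \aleph_1$, though in fact for this argument mere $\omega_1 \subseteq M$ suffices since the reflected object $Z$ is countable and a countable subset of $M \cap \kappa$ with $\omega_1 \subseteq M$ need not lie inside $M$ — so one does want some closure, or one instead takes a union of $\omega_1$ many countable skeletons transfinitely, handling at stage $\xi < \omega_1$ all the ``bad conditions'' that appeared by stage $\xi$, using ccc at each step). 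The cleanest route is probably the transfinite union: build an increasing $\omega_1$-chain $\langle W_\xi : \xi < \omega_1\rangle$ of countable subsets of $\kappa$ so that $W_{\xi+1}$ contains, for every $i < \theta$ and every $p$ in a fixed countable dense subset witnessing the relevant antichains, a countable $Z$ predense below $p$ inside $\dot{A}_i$ relative to $W_\xi$-information — then $W = \bigcup_{\xi<\omega_1} W_\xi$ works by a catching-your-tail argument. Either way the key non-obstacle is that ccc makes every relevant predense set countable; the only real work is arranging the closure so that the contradiction at the end goes through, and checking $\add(\mathcal{J}) = \kappa$ survives (which it does, as $\mathcal{I}$ is $\kappa$-complete and $\mathbb{B}$ is ccc, so $\mathcal{J}$ remains $\kappa$-complete).
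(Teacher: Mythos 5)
There is a genuine gap, and it is exactly at the point you wave past. The theorem is only nontrivial when $\aleph_1 < \theta < \kappa$: for $\theta \leq \aleph_1$ your very first observation already finishes the proof (for each $i$ take a countable maximal antichain refining the predense set $\{p_{i,\alpha} : \alpha \in T_i\}$ and collect witnesses; the union over $i < \theta$ has size $\theta \cdot \aleph_0 \leq \aleph_1$), and no measurability is used. But when $\theta > \aleph_1$, both of your mechanisms for cutting down to $\aleph_1$ fail. An elementary submodel $M$ of size $\aleph_1$ cannot contain every $i < \theta$ as an element, so for $i \notin M$ there is no first-order statement with parameters in $M$ to reflect, and $W = M \cap \kappa$ gives you nothing about $\dot{A}_i$. (Even for $i \in M$ your reflection step is not quite right as stated, since the bad condition $p$ need not lie in $M$; the standard repair is to reflect the existence of a countable maximal antichain refining $\{p_{i,\alpha} : \alpha \in T_i\}$ rather than anything mentioning $p$. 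That is fixable; the quantification over $i$ is not.) The transfinite-union variant has the same defect: at each of the $\omega_1$ stages you must service all $\theta$ indices, each contributing a fresh countable set, so each stage adds $\theta$ points and you end with a set of size $\theta$, which is no better than the trivial bound. Your closing claim that ``the core argument here is just ccc plus elementarity, which is why it works for arbitrary ccc $\mathbb{B}$'' is therefore the wrong diagnosis: ccc alone gives a transversal of size $\theta$, and the entire content of the theorem is compressing $\theta$-many requirements into a single set of size $\aleph_1$, which is where the measurable is indispensable.

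The paper's proof does this compression as follows. Using inaccessibility, build an increasing chain $\langle \mathbb{B}_\alpha : \alpha < \kappa\rangle$ of complete subalgebras of $\mathbb{B}$, each of size $<\kappa$, continuous at points of uncountable cofinality, with $p_{i,\beta} \in \mathbb{B}_\alpha$ for all $i < \theta$ and $\beta < \alpha$. Since $\mathcal{I}$ is a normal prime ideal and $\theta < \kappa$, one finds a single measure-one set $Y_1$ and a stage $\alpha_\star$ such that for \emph{every} $i < \theta$ and every $\alpha \in Y_1$ the projection of $p_{i,\alpha}$ to $\mathbb{B}_\alpha$ is constant, and in fact equals $1_{\mathbb{B}}$ (otherwise its complement would kill a predense set). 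Now any $X \subseteq Y_1 \setminus \alpha_\star$ of order type $\omega_1$, arranged so that $\sup(X)$ has cofinality $\omega_1$ and the chain is continuous there, works simultaneously for all $i$: given any $p$, its projection into $\mathbb{B}_{\sup(X)}$ lands in some $\mathbb{B}_\gamma$ with $\gamma \in X$, and the three-step decomposition of $\mathbb{B}$ over $\mathbb{B}_\gamma$ and $\mathbb{B}_{\sup(X)}$ shows $p$ is compatible with $p_{i,\gamma}$. The normal ultrafilter is what lets one set $X$ of size $\aleph_1$ serve all $\theta$ families at once; this is the idea missing from your proposal.
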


\begin{proof} It suffices to show that the following holds in $V^{\bb{B}}$: For every $\cal{A} \subseteq \cal{J}^+$, if $|\cal{A}| < \kappa$, then there exists $X \in [\kappa]^{\aleph_1}$ such that $X$ meets every member of $\cal{A}$. \\

Suppose $\theta < \kappa$ and $\Vdash_{\bb{B}} \{\name{A}_i : i < \theta\} \subseteq \cal{J}^+$. Choose $Y \subseteq \kappa$ of $\cal{I}$-measure one such that for every $i < \theta$ and $\alpha \in Y$, $p_{i, \alpha} = [[\alpha \in \name{A}_i]] > 0_{\bb{B}}$.  Using the inaccessibility of $\kappa$, the following claim is easy to check.

\begin{claim}
\label{c216}
There exists $\langle \bb{B}_{\alpha}: \alpha < \kappa \rangle$ such that the following hold.

\begin{itemize}

\item[(i)] $\bb{B}_{\alpha} \lessdot \bb{B}$ and $|\bb{B}_{\alpha}| < \kappa$.

\item[(ii)] $\bb{B}_{\alpha}$'s are increasing and continuous at $\alpha$ when $\cf(\alpha) > \aleph_0$.

\item[(iii)] $\{p_{i, \beta}: \beta < \alpha, i < \theta\} \subseteq \bb{B}_{\alpha}$.
 
\end{itemize}
\end{claim}

Let $\pi_{\alpha}: \bb{B} \to \bb{B}_{\alpha}$ be a projection map witnessing $\bb{B}_{\alpha} \lessdot \bb{B}$. Choose $f: \kappa \to \kappa$ such that for every $i < \theta$ and $\alpha < \kappa$,  we have $\alpha < f(\alpha)$ and $p_{i, \alpha} \in \bb{B}_{f(\alpha)}$. Choose $Y_1 \subseteq Y$ of measure one and $\alpha_{\star} < \kappa$ such that for every $i < \theta$, $\pi_{\alpha}(p_{i, \alpha}) = p_{i, \star} \in \bb{B}_{\alpha_{\star}}$ does not depend on $\alpha \in Y_1$ and $\rng(f \res \alpha) \subseteq \alpha$ for every $\alpha \in Y_1$. Note that $p_{i, \star} = 1_{\bb{B}}$ since $\Vdash_{\bb{B}} \name{A}_i \in \cal{J}^+$. Let $X \subseteq Y \setminus \alpha_{\star}$ be such that $\otp(X) = \omega_1$ and for every $\alpha < \beta$ in $X$, $f(\alpha) < f(\beta)$.

\begin{claim}
\label{c217}
For every $i < \theta$, $\{p_{i, \alpha}: \alpha \in X\}$ is predense in $\bb{B}$.
\end{claim}

\begin{proof} Let $\sup(X) = \gamma_{\star}$. Then $\cf(\gamma_{\star}) = \aleph_1$ and hence $\bb{B}_{\gamma_{\star}} = \bigcup \{\bb{B}_{\gamma}: \gamma \in X\}$. Fix $i < \theta$. Given $p \in \bb{B}$, choose $\gamma \in X$ such that $\pi_{\gamma_{\star}}(p) \in \bb{B}_{\gamma}$. Now since

$$\bb{B} = \bb{B}_{\gamma} \star \bb{B}_{\gamma_{\star}} \slash \bb{B}_{\gamma} \star \bb{B} \slash \bb{B}_{\gamma_{\star}}$$

we can decompose $p = (\pi_{\gamma_{\star}}(p), 1, x)$ and $p_{i, \gamma} = (1, y, 1)$. Hence $p, p_{i, \gamma}$ are compatible.  \end{proof}

It follows that $\cal{J}$ is $\omega_2$-supersaturated. \end{proof}

\section{Consistently, there are $\omega_1$-saturated ideals on $\cont$ and all of them are supersaturated}

The aim of this section is to show that it is consistent that every $\omega_1$-saturated $\sigma$-ideal is supersaturated.

\begin{thm}
\label{t31}
It is consistent that there is a normal supersaturated ideal on $\cont$ and every $\omega_1$-saturated $\sigma$-ideal is supersaturated.
\end{thm}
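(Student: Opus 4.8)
The plan is to start from a measurable cardinal $\kappa$ and a normal prime ideal on it, and to arrange a forcing extension in which (a) $\kappa = \cont$, (b) the ideal generated by the normal ideal remains supersaturated, and (c) there are no $\omega_1$-saturated ideals at all \emph{except} on $\kappa$ (and those few that are automatically supersaturated). The rough strategy is a two-step construction. First, I would collapse or otherwise arrange matters so that below $\kappa$ there is a uniform, highly homogeneous structure; in particular I want every $\omega_1$-saturated $\sigma$-ideal in the final model to be, up to restriction to a positive set and the obvious quotient, one of a very short list of ideals — ideally, only the generated ideal on $\kappa$ itself. The standard phenomenon to exploit is that if $2^{\aleph_0} < \kappa$ fails, i.e.\ $\cont = \kappa$, then an $\omega_1$-saturated ideal on an uncountable cardinal $\mu < \cont$ forces $\mu$ to carry a witnessing measure after a ccc forcing, and such ideals are rather rigid; meanwhile on $\cont$ itself one can hope to control the ideals by the forcing used to blow up the continuum.

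Concretely, I would take a finite support iteration (or finite support product) of length $\kappa$ of $\sigma$-linked — better, $\sigma$-centered — ccc posets designed to kill saturated ideals on small cardinals while respecting $\kappa$. By Theorem~\ref{c213}, any finite support iteration of $\sigma$-linked forcings of the relevant length keeps the generated ideal $\cal{J}$ on $\kappa$ supersaturated; this takes care of clause (a)/(b) provided the iterands are $\sigma$-linked and the length is at most $\kappa^+$. The key design requirement on the iterands is a \emph{destruction} step: given a name for a potential $\omega_1$-saturated ideal on some $\mu < \cont$ (or on $\cont$ not of the desired form), the iteration should, cofinally often, generically split a positive set of that ideal into $\omega_1$ positive pieces, or add a real coding an antichain of size $\omega_1$, thereby contradicting $\omega_1$-saturation. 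Because there are at most $\cont = \kappa$ many such names appearing along the iteration (each $\omega_1$-saturated ideal lives in an intermediate model $V[G_\eta]$ for some $\eta < \kappa^+$ by the ccc and finite support, as in the proof of Theorem~\ref{c213}), a bookkeeping argument of length $\kappa^+$ (or $\kappa$, using that $|\bb{P}_\xi| \le |\xi \cdot \cont|$, cf.\ Fact~\ref{f211}) lets me handle them all. I would also need to verify that the generated ideal on $\kappa$ genuinely remains $\omega_1$-saturated (it does, being normal, since $\kappa$ stays measurable in the sense that the dual filter still generates a $\kappa$-complete ultrafilter in the extension — or rather the ideal $\cal{J}$ is $\omega_1$-saturated by the standard argument), and supersaturated by Theorem~\ref{c213}; so in the final model the \emph{only} $\omega_1$-saturated ideals are (restrictions of) $\cal{J}$ and its isomorphic copies, all of which are supersaturated.

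The main obstacle, and where the real work lies, is the destruction step: designing a single $\sigma$-linked ccc forcing that, given a name $\name{\cal{K}}$ for a $\sigma$-ideal on a set of size $< \cont$ together with a name for an $\cal{K}$-positive set, adds an antichain of size $\aleph_1$ in the quotient $\P(X)/\name{\cal{K}}$ — \emph{without} destroying $\sigma$-linkedness and without interfering with the fixed measurable $\kappa$ or with already-handled targets. A natural candidate is a poset that shoots an uncountable almost disjoint-like family of positive sets through the target, or one that adds an uncountable set of new reals diagonalizing the ideal; one must check such a poset is $\sigma$-linked (or absorb it into a $\sigma$-linked master forcing). There is also the delicate point that ``being $\omega_1$-saturated'' is not absolute downward, so the bookkeeping must anticipate ideals that only \emph{become} saturated at a late stage; handling this requires a reflection argument showing that if $\cal{K}$ is $\omega_1$-saturated in the final model then a positive set and enough of the ideal already appear — and are still positive / generate the right quotient — at some intermediate stage where the destruction machinery was applied. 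Finally, one must rule out new $\omega_1$-saturated ideals appearing on $\cont$ itself other than $\cal{J}$; here I would use that the iteration adds Cohen-like reals cofinally, so any $\sigma$-ideal on $\cont$ not concentrating on a set where $\cal{J}$ lives will again be split by the destruction steps. I expect the verification that the composite forcing stays $\sigma$-linked, so that Theorem~\ref{c213} applies verbatim, to be the single most technical ingredient.
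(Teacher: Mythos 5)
There is a genuine gap, and in fact the central mechanism you propose cannot work. Your plan is to \emph{destroy} all unwanted $\omega_1$-saturated ideals by cofinally adding uncountable antichains to their quotients via ccc iterands. But the ideals that actually matter are immune to this: if $\cal{U}$ is any $\kappa$-complete uniform ultrafilter on any $\lambda$ in the ground model (with $\kappa>\cont$ there), then the ideal generated by its dual in \emph{any} ccc extension is automatically $\omega_1$-saturated (an uncountable antichain of positive sets would give, via the ccc, uncountably many conditions below a single condition forcing pairwise disjoint sets positive, hence an uncountable antichain in the ultrafilter quotient in $V$, which is trivial). There are many such ultrafilters on many $\lambda$, and they are not all restrictions or isomorphic copies of your designated normal ideal on $\kappa$, so your claim that the final model contains ``only $\cal{J}$ and its copies'' is unattainable by ccc forcing. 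The real content of the theorem is to show that all of these surviving ideals become supersaturated, and your proposal contains no argument for that; you defer exactly this point (``the main obstacle, where the real work lies'') to an unconstructed destruction poset that cannot exist for these targets.

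The paper's route avoids destruction entirely. Start with $\cont=\omega_1$ and $\kappa$ the \emph{least} measurable, and force with $\Cohen_{\kappa}$. A reduction lemma (Lemma \ref{l32}) cuts any $\omega_1$-saturated $\sigma$-ideal down to uniform pieces with constant additivity $\mu$; if $\mu>\cont$ the pieces are prime and there is nothing to prove; if $\omega_1<\mu<\kappa$, the trace ideal $\{X:1\Vdash X\in\cal{K}\}$ is $\mu$-additive and $\omega_1$-saturated in $V$ where $\mu>\cont$, making $\mu$ measurable below $\kappa$ --- ruled out by minimality, with no forcing needed. The remaining case $\mu=\kappa$ is the heart of the proof: the trace ideal decomposes into countably many $\kappa$-complete prime ideals (Lemma \ref{l33}), and Lemma \ref{l34} shows by a direct combinatorial argument about Cohen conditions (countable supports, stabilizing isomorphism types of $(S_{i,\alpha},2^{S_{i,\alpha}},p_{i,\alpha})$ over an ultrafilter set, and a meager-set/disjoint-support argument to extract a countable predense transversal) that the ideal generated by \emph{any} $\kappa$-complete uniform ultrafilter after $\Cohen_{\kappa}$ is supersaturated. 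This lemma, not a killing iteration, is the missing key ingredient in your proposal.
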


\begin{lem}
\label{l32}
Suppose that every $\cal{I}$ satisfying (i)-(iv) below is supersaturated.

\begin{itemize}
\item[(i)] $\cal{I}$ is a uniform ideal on $\lambda$,

\item[(ii)] $\omega_1 \leq \mu \leq \lambda$,

\item[(iii)] for every $X \in \cal{I}^+$, $\add(\cal{I} \res X) = \mu$ and 

\item[(iv)] $\cal{I}$ is $\omega_1$-saturated.  

\end{itemize}

Then every $\omega_1$-saturated $\sigma$-ideal is supersaturated.
\end{lem}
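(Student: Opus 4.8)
The plan is to reduce the general statement ``every $\omega_1$-saturated $\sigma$-ideal is supersaturated'' to the special case in which the ideal has the particularly rigid structure described in (i)--(iv), and then invoke the hypothesis. So let $\cal{J}$ be an arbitrary $\omega_1$-saturated $\sigma$-ideal on a set $X$, and let $\cal{A} \subseteq \cal{J}^+$ with $|\cal{A}| < \add(\cal{J})$; we must find a countable set meeting every member of $\cal{A}$. First I would discard the trivial cases: if $\add(\cal{J}) = \omega_1$ then $\cal{A} = \emptyset$ and there is nothing to prove, so assume $\add(\cal{J}) \geq \omega_2$. By replacing $X$ with an underlying set of cardinality $\lambda := |X|$ we may assume $X = \lambda$, and since $\cal{J}$ is $\omega_1$-saturated and not all of $\cal{P}(\lambda)$, it is $\lambda$-additive in the sense that $\add(\cal{J})$ is well-defined; because every $\omega_1$-saturated $\sigma$-ideal is in fact $\add(\cal{J})$-complete and $\add(\cal{J})$ is regular, set $\mu := \add(\cal{J})$, so $\omega_1 \leq \mu$.

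Next I would arrange uniformity. The non-uniformity of $\cal{J}$ is controlled by the sets in $\cal{J}$ of cardinality $\lambda$; using $\omega_1$-saturation one finds a single set $S \in \cal{J}^+$ on which the restricted ideal $\cal{I} := \cal{J} \res S$ (reindexed onto its underlying support) is uniform and satisfies $\add(\cal{I} \res Y) = \mu$ for every $Y \in \cal{I}^+$ — the point being that the function $Y \mapsto \add(\cal{J} \res Y)$ takes only countably many values on a maximal antichain (by $\omega_1$-saturation) and is monotone, so it stabilizes on a positive set; a further shrinking makes the support genuinely uniform of some size $\lambda' \leq \lambda$. Now $\cal{I}$ satisfies (i)--(iv) with parameters $(\lambda', \mu)$, and $\cal{I}$ is still $\omega_1$-saturated (a restriction of an $\omega_1$-saturated ideal is $\omega_1$-saturated). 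The family $\{A \cap S : A \in \cal{A}\}$ need not be $\cal{I}$-positive — some $A$ may have $A \cap S \in \cal{J}$ — so here is the one genuinely delicate point: I would handle the ``bad'' $A$'s by an exhaustion argument. Since each $A \in \cal{A}$ is $\cal{J}$-positive, $A$ is positive for $\cal{J} \res A$; choosing a maximal antichain below each such $A$ inside an appropriate uniform positive piece, and using $|\cal{A}| < \mu = \add(\cal{J})$ together with $\omega_1$-saturation to keep everything inside countably many pieces, one reduces to finitely or countably many applications of the hypothesized special case plus a union. Concretely: by $\omega_1$-saturation fix a maximal antichain $\{S_n : n < \omega\}$ in $\cal{J}^+$ such that each $\cal{J} \res S_n$ (reindexed) is uniform with $\add = \mu$ on every positive subset; for each $A \in \cal{A}$ the set $N_A = \{n : A \cap S_n \notin \cal{J}\}$ is nonempty, and $\{A \cap S_n : A \in \cal{A},\ n \in N_A\}$ splits into countably many families $\cal{A}_n := \{A \cap S_n : n \in N_A\}$, each $\subseteq (\cal{J} \res S_n)^+$ of size $< \mu$.

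Then I would apply the hypothesis to each $\cal{I}_n := \cal{J} \res S_n$: it satisfies (i)--(iv), hence is supersaturated, so there is a countable $W_n \subseteq S_n$ meeting every member of $\cal{A}_n$; set $W = \bigcup_n W_n$, a countable set. For any $A \in \cal{A}$ pick $n \in N_A$; then $A \cap S_n \in \cal{A}_n$, so $W_n$ meets $A \cap S_n$, hence $W$ meets $A$. This gives the countable transversal and completes the proof. The main obstacle I anticipate is the bookkeeping in the middle step — verifying that one really can cover the relevant restrictions of $\cal{J}$ by countably many uniform pieces each of whose positive-restriction-additivity is constantly $\mu$, and that $\omega_1$-saturation survives each restriction — but these are standard facts about $\omega_1$-saturated $\sigma$-ideals (monotonicity and near-constancy of $\add$ along antichains, $\add(\cal{J})$ regular, restrictions stay $\omega_1$-saturated), so the argument is mostly a matter of assembling them correctly rather than a new idea.
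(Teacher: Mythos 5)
Your proof is correct and follows essentially the same route as the paper's: use $\omega_1$-saturation to produce a countable maximal antichain (equivalently, partition) of positive pieces on each of which $\cal{J}$ restricts to a uniform ideal with constant additivity on all positive subsets, apply the hypothesis to each piece, and take the countable union of the resulting countable transversals. The only harmless slips are that when $\add(\cal{J})=\omega_1$ the family $\cal{A}$ is countable rather than empty (the case is still trivial: pick one point from each set), and that the pieces need not all have additivity equal to $\add(\cal{J})$ --- they may have various values $\mu_n \geq \add(\cal{J})$, but each piece still satisfies (i)--(iv) with its own $\mu_n$, so the hypothesis applies all the same.
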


\begin{proof}
Suppose $\cal{J}$ is an $\omega_1$-saturated $\sigma$-ideal on $X$. Note that for every $A \in \cal{J}^+$, there exists $B \subseteq A$ such that $(\star)_B$ holds where \\

$(\star)_B$ says the following:  $B \in \cal{J}^+$, $[B]^{< |B|} \subseteq \cal{J}$ and for every $C \subseteq B$, if $C \in \cal{J}^+$, then $\add(\cal{J} \res C) = \add(\cal{J} \res B)$. \\

Since $\cal{J}$ is $\omega_1$-saturated, we can find a countable partition $\cal{F}$ of $X$ such that for each $B \in \cal{F}$, $(\star)_B$ holds. Now by assumption, each $\cal{J} \res B$ is supersaturated. Hence $\cal{J}$ is also supersaturated. \end{proof}

\begin{lem}
\label{l33}
Suppose $\bb{P}$ is a ccc forcing, $\kappa > \cont$ and $V^{\bb{P}} \models \cal{J}$ is a $\kappa$-complete $\omega_1$-saturated uniform ideal on $\lambda$. Let $\cal{I} = \{X \subseteq \kappa: 1_{\bb{P}} \Vdash X \in \cal{J} \}$. Then there is a countable partition $\cal{F}$ of $\lambda$ such that for every $A \in \cal{F}$, $\cal{I} \res A = \{Y \subseteq \lambda: Y \cap A \in \cal{I}\}$ is a $\kappa$-complete prime ideal on $\lambda$. 
\end{lem}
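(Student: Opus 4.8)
The plan is to work in $V$ and study the ideal $\cal{I}$ on $\lambda$ directly, using the ccc-ness of $\bb{P}$ to push the $\kappa$-completeness and saturation of $\cal{J}$ down to $V$. First I would check that $\cal{I}$ is a $\kappa$-complete uniform ideal on $\lambda$ in $V$: completeness follows because a union of $<\kappa$ many sets forced into $\cal{J}$ is, by ccc-ness and $\kappa > \cont \geq \aleph_1$, still forced into the $\kappa$-complete ideal $\cal{J}$ (for $\kappa$ regular this is immediate; if $\kappa$ is singular one uses that $\kappa$-completeness of $\cal{J}$ together with $\kappa > \cont$ still gives $<\kappa$-additivity of $\cal{I}$ after absorbing names into antichains of size $\leq \cont < \kappa$). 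Uniformity of $\cal{I}$ is inherited from uniformity of $\cal{J}$. The key point is that $\omega_1$-saturation of $\cal{J}$ transfers: if $\langle \name{A}_\xi : \xi < \omega_1\rangle$ were forced to be an antichain mod $\cal{J}$ whose supports (choosing a maximal antichain deciding membership for each relevant ordinal) all lie in $\bb{P}$, then since $|\bb{P}|$-many pieces suffice and $\bb{P}$ is ccc, one extracts in $V$ an $\cal{I}$-positive almost disjoint refinement; more carefully, one shows $\cal{I}$ is $\omega_1$-saturated in $V$ by: given $\{A_\xi : \xi < \omega_1\} \subseteq \cal{I}^+$, each $A_\xi$ is forced by some condition to be $\cal{J}$-positive, a ccc pigeonhole gives uncountably many sharing a condition $p$, and below $p$ the $A_\xi$'s restricted appropriately would contradict $\omega_1$-saturation of $\cal{J}$ in $V^{\bb{P}}$.

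Once $\cal{I}$ is known to be a $\kappa$-complete $\omega_1$-saturated uniform ideal on $\lambda$ in $V$ with $\kappa > \cont \geq \aleph_1$, I would invoke the classical structure theory of $\omega_1$-saturated $\kappa$-complete ideals: by the standard disintegration argument (cf. Tarski / the analysis behind the Ulam–Solovay results), every $\omega_1$-saturated $\kappa$-complete ideal $\cal{I}$ on $\lambda$ is, restricted to countably many pieces of a partition, a prime (maximal) ideal. Concretely: let $\cal{F}$ be a maximal family of pairwise disjoint $\cal{I}$-positive sets $A$ such that $\cal{I} \res A$ is prime; $\omega_1$-saturation makes $\cal{F}$ countable, and I must argue $\lambda \setminus \bigcup \cal{F} \in \cal{I}$. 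If not, let $B = \lambda \setminus \bigcup\cal{F} \in \cal{I}^+$; then $\cal{I}\res B$ is $\omega_1$-saturated and $\kappa$-complete but nowhere prime, and a standard tree/splitting construction produces an almost disjoint family of size $\omega_1$ of $\cal{I}$-positive subsets of $B$ (splitting $B$ repeatedly using non-primeness and taking branches, then using $\kappa > \omega_1$-completeness to keep branches positive), contradicting $\omega_1$-saturation. So $\lambda \setminus \bigcup\cal{F} \in \cal{I}$, and enlarging one piece of $\cal{F}$ by this null set yields the desired countable partition.

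The last thing to verify is that each $\cal{I} \res A$ for $A \in \cal{F}$ remains $\kappa$-complete and prime, which is immediate: a restriction of a $\kappa$-complete ideal is $\kappa$-complete, and primeness was the defining property of membership in $\cal{F}$.

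The main obstacle I expect is the transfer of $\omega_1$-saturation from $V^{\bb{P}}$ down to $V$ for the ideal $\cal{I}$: one must handle the names carefully, since an $\cal{I}$-positive set in $V$ need not be $\cal{J}$-positive with Boolean value $1$, and the ccc pigeonhole has to be applied uniformly across an uncountable family while respecting that "almost disjoint mod $\cal{I}$" in $V$ must translate to "almost disjoint mod $\cal{J}$" below a common condition. A secondary subtlety is the $\kappa$-completeness of $\cal{I}$ when $\kappa$ is singular, where the absorption of a $<\kappa$-sized union of names into $\leq \cont$-sized antichains must be done with care; but since $\kappa > \cont$ this always goes through. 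The disintegration argument producing primeness is classical and should be quotable, though I would present the branch-construction contradiction in a sentence or two for completeness.
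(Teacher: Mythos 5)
Your overall route is the same as the paper's: show that $\cal{I}$ is a $\kappa$-complete $\omega_1$-saturated uniform ideal in $V$, then invoke the classical fact that such an ideal with $\kappa > \cont$ is atomic, so that a maximal disjoint family of atoms (countable by saturation, with the null remainder absorbed into one piece) gives the partition. Your transfer of $\omega_1$-saturation is also the paper's argument, though the phrase ``a ccc pigeonhole gives uncountably many sharing a condition $p$'' is not literally correct; what is needed (and what the paper uses) is the standard ccc lemma that for an uncountable family of conditions $\{p_{A_\xi}\}$ some single $p$ forces uncountably many of them into the generic filter, so that below $p$ uncountably many of the pairwise disjoint $A_\xi$ are simultaneously $\cal{J}$-positive, contradicting saturation of $\cal{J}$. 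Your worry about singular $\kappa$ in the completeness step is unnecessary: a union of fewer than $\kappa$ many sets each forced into $\cal{J}$ by $1_{\bb{P}}$ is forced into $\cal{J}$ by $1_{\bb{P}}$ outright, since the indexing family lies in $V$ and $\cal{J}$ is $\kappa$-complete in $V^{\bb{P}}$; no absorption into antichains is required.

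The genuine problem is your sketch of the disintegration step. You claim that if $B$ is positive and $\cal{I}\res B$ is nowhere prime, then a splitting-tree construction yields $\omega_1$ many pairwise disjoint $\cal{I}$-positive subsets of $B$, ``using $\kappa$-completeness to keep branches positive.'' As written this makes no use of $\kappa > \cont$, so it would prove that \emph{every} $\omega_1$-saturated $\sigma$-ideal is atomic --- which the Lebesgue null ideal refutes. The failure is at limit levels: a countable decreasing intersection of $\cal{I}$-positive sets can be $\cal{I}$-null, and $\kappa$-completeness (which concerns unions of null sets, not intersections of positive ones) does nothing to keep an individual branch alive. The correct classical argument (Ulam--Tarski--Solovay) uses $\kappa > \cont$ in exactly this place: at each countable limit level there are at most $\cont < \kappa$ branch-intersections, so by $\kappa$-completeness the union of the null ones is null and not every point of $B$ can drop out; iterating through $\omega_1 < \kappa$ levels, some point survives to level $\omega_1$, and the successive differences along its branch form an uncountable antichain of positive sets, contradicting $\omega_1$-saturation. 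The paper simply quotes the conclusion as ``$\cal{I}$ is $\kappa$-complete and $\kappa > \cont$, hence nowhere atomless,'' which is legitimate; if you prefer to sketch the proof, the sketch must bring in $\kappa > \cont$ at the limit stages rather than appealing to completeness along a single branch.
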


\begin{proof}
It is clear that $\cal{I}$ is a $\kappa$-complete uniform ideal on $\lambda$. Suppose $\cal{F} \subseteq \cal{I}^+$ is an uncountable family of pairwise disjoint sets. For each $A \in \cal{F}$, choose $p_A \in \bb{P}$ such that $p_A \Vdash A \notin \cal{J}$. Since $\bb{P}$ is ccc, some $p \in \bb{P}$ forces uncountably many $p_A$'s into the $\bb{P}$-generic filter. But this contradicts the fact that $\cal{J}$ is $\omega_1$-saturated in $V^{\bb{P}}$. So $\cal{I}$ is $\omega_1$-saturated. Since $\cal{I}$ is $\kappa$-complete and $\kappa > \cont$, $\cal{I}$ is nowhere atomless. Hence there is a countable partition $\cal{F}$ of $\lambda$ such that for every $A \in \cal{F}$, $\cal{I} \res A = \{Y \subseteq \lambda: Y \cap A \in \cal{I}\}$ is a $\kappa$-complete prime ideal on $\lambda$.
\end{proof}

\begin{lem}
\label{l34}
Suppose $\kappa$ is an inaccessible cardinal and $\cal{U}$ is a $\kappa$-complete uniform ultrafilter on $\lambda$. Let $\bb{P} = \Cohen_{\kappa}$. Let $\cal{J}$ be the ideal generated by the dual ideal of $\cal{U}$ in $V^{\bb{P}}$. Then for each $\cal{A} \subseteq \cal{J}^+$, if $|\cal{A}| < \kappa$, then there exists a countable set that meets every member of $\cal{A}$.
\end{lem}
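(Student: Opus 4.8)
The plan is to run the argument of the proof of Theorem~\ref{t22}, with two simplifications coming from the special features here: Cohen conditions are automatically finite, so only a finite condition can witness a failure; and the dual ideal of $\cal{U}$ is \emph{prime}, so ``positivity'' just means $\cal{U}$-largeness and the $\kappa$-completeness of $\cal{U}$ will do the job that normality and supersaturation did in Theorem~\ref{t22}. Write $\bb{P}=\Cohen_{\kappa}$ as the poset of finite partial functions from $\kappa$ to $2$, and note that, since $\cal{U}$ is $\kappa$-complete and $\bb{P}$ is ccc, $A\in\cal{J}^{+}$ iff $A$ meets every member of $\cal{U}$ (the ground model ultrafilter). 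As usual it suffices to fix $\theta<\kappa$ and names $\langle\name{A}_{i}:i<\theta\rangle$ with $1_{\bb{P}}$ forcing them to be $\cal{J}$-positive subsets of $\lambda$, and to find $B\in[\lambda]^{\aleph_{0}}\cap V$ with $1_{\bb{P}}\Vdash(\forall i<\theta)(\name{A}_{i}\cap B\neq\emptyset)$. For each $i<\theta$ and $\alpha<\lambda$ I would fix a maximal antichain $D_{i,\alpha}$ among the conditions forcing $\alpha\in\name{A}_{i}$ (countable, by ccc) and put $S_{i,\alpha}=\bigcup_{q\in D_{i,\alpha}}\dom(q)\in[\kappa]^{\leq\aleph_{0}}$, $T_{i}=\{\alpha:D_{i,\alpha}\neq\emptyset\}$. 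The analogue of Claim~\ref{c23} then holds with the same proof: for every $p\in\bb{P}$ and $i<\theta$, the set $Y_{i,p}:=\{\alpha\in T_{i}:$ some $q\in D_{i,\alpha}$ is compatible with $p\}$ lies in $\cal{U}$, since otherwise $\lambda\setminus Y_{i,p}\in\cal{U}$ and $p\Vdash\name{A}_{i}\cap(\lambda\setminus Y_{i,p})=\emptyset$, contradicting $1\Vdash\name{A}_{i}\in\cal{J}^{+}$. In particular each $T_{i}\in\cal{U}$.

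Next I would extract a strong $\Delta$-system modulo $\cal{U}$. For $i<\theta$ put $R_{i}=\{\xi<\kappa:\{\alpha\in T_{i}:\xi\in S_{i,\alpha}\}\in\cal{U}\}$; this is countable, because an uncountable $R_{i}$ would, by $\kappa$-completeness of $\cal{U}$ and $\aleph_{1}<\kappa$, imply that for $\cal{U}$-many $\alpha$ the set $S_{i,\alpha}$ contains a fixed uncountable set. Since $\kappa$ is inaccessible, $|[\gamma]^{\leq\aleph_{0}}|<\kappa$ for $\gamma<\kappa$, so $\kappa$-completeness makes $\alpha\mapsto S_{i,\alpha}\cap\gamma$ constant on a member of $\cal{U}$, and a short calculation identifies this constant with $R_{i}\cap\gamma$. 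Likewise, using $2^{\aleph_{0}}<\kappa$, the isomorphism type of the structure $(S_{i,\alpha};<,R_{i},D_{i,\alpha})$ --- that is, $\otp(S_{i,\alpha})$, the positions occupied by $R_{i}$ inside $S_{i,\alpha}$, and the copy of $D_{i,\alpha}$ obtained by transporting each $q\in D_{i,\alpha}$ along the increasing enumeration of $S_{i,\alpha}$ --- assumes fewer than $\kappa$ values, hence is constant on some $T_{i}^{\star}\in\cal{U}$ on which moreover $R_{i}\subseteq S_{i,\alpha}$. Set $Y^{\star}=\bigcap_{i<\theta}T_{i}^{\star}\in\cal{U}$ ($\theta<\kappa$). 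Now build $B=\{\alpha_{n}:n<\omega\}\subseteq Y^{\star}$ recursively: having chosen $\alpha_{0},\dots,\alpha_{n-1}$, let $\gamma_{n}=\sup(\{\xi+1:\xi\in S_{i,\alpha_{m}},\ i<\theta,\ m<n\})<\kappa$ (regularity of $\kappa$, $\theta<\kappa$), and choose $\alpha_{n}\in Y^{\star}$, distinct from the earlier $\alpha_{m}$, with $(S_{i,\alpha_{n}}\setminus R_{i})\cap\gamma_{n}=\emptyset$ for all $i<\theta$; such $\alpha_{n}$ exists because $\{\alpha:S_{i,\alpha}\cap\gamma_{n}=R_{i}\cap\gamma_{n}\}\in\cal{U}$ for each $i$. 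Then the sets $S_{i,\alpha_{n}}\setminus R_{i}$, $n<\omega$, are pairwise disjoint, so $\langle S_{i,\alpha_{n}}:n<\omega\rangle$ is a $\Delta$-system with root $R_{i}$; combined with the constancy of types on $Y^{\star}$, $\{D_{i,\alpha_{n}}:n<\omega\}$ is, for each $i$, a genuine strong $\Delta$-system with root $R_{i}$.

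Finally I would check $1_{\bb{P}}\Vdash(\forall i<\theta)(\name{A}_{i}\cap B\neq\emptyset)$. If this failed, some \emph{finite} $p\in\bb{P}$ and some $i<\theta$ would satisfy $p\Vdash\name{A}_{i}\cap B=\emptyset$, i.e.\ $p$ is incompatible with every member of $D_{i,\alpha_{n}}$ for every $n$; equivalently $\alpha_{n}\notin Y_{i,p}$ for all $n$. But $Y_{i,p}\cap Y^{\star}\in\cal{U}$ and, as $\dom(p)$ is finite, $\{\alpha:(S_{i,\alpha}\setminus R_{i})\cap\dom(p)=\emptyset\}\in\cal{U}$, so there is $\alpha^{\star}$ in their intersection and $q^{\star}\in D_{i,\alpha^{\star}}$ compatible with $p$, whence $\dom(q^{\star})\cap\dom(p)\subseteq R_{i}$. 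Transporting $q^{\star}$ via the common-type order-isomorphisms yields $q_{n}\in D_{i,\alpha_{n}}$; since these isomorphisms fix $R_{i}$ pointwise and $R_{i}$ occupies the same positions in each $S_{i,\alpha_{n}}$, $q_{n}$ agrees with $q^{\star}$ on $R_{i}$, and for all but at most $|\dom(p)|$ values of $n$ (those with $(S_{i,\alpha_{n}}\setminus R_{i})\cap\dom(p)=\emptyset$, by disjointness) one gets $\dom(q_{n})\cap\dom(p)\subseteq R_{i}$, so $q_{n}$ is compatible with $p$ and $\alpha_{n}\in Y_{i,p}$ --- a contradiction. The heart of the matter is the middle step: one cannot in general thin $\cal{U}$ to a $\cal{U}$-large strong $\Delta$-system of the $D_{i,\alpha}$, but --- and here the finiteness of Cohen conditions is decisive --- a countable strong $\Delta$-system, manufactured from the ``stabilization modulo $\cal{U}$'' facts by the recursion above, is all that the final contradiction needs.
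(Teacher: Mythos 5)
Your proof is correct. It shares the overall architecture of the paper's argument for Lemma \ref{l34} --- countable supports, stabilization over a $\cal{U}$-large set using $\kappa$-completeness and inaccessibility, extraction of a countable $\Delta$-system with a common countable root $R_i$ that the order isomorphisms fix pointwise, and the observation that a finite Cohen condition can meet only finitely many of the pairwise disjoint tails $S_{i,\alpha_n}\setminus R_i$ --- but it implements the decisive step differently. The paper works with the Boolean values $p_{i,\alpha}=[[\alpha\in\name{A}_i]]$, which are Baire subsets of $2^{\kappa}$ with countable support, and closes with a Kuratowski--Ulam-style category argument: the set $B_i$ of points of $2^{R_i}$ over which the fiber of $p_{i,\alpha}$ is meager is shown to be independent of $\alpha$ and meager, and predensity of $\{p_{i,\alpha}:\alpha\in B\}$ follows by analyzing fibers over $R_i$. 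You instead replace the Boolean values by countable maximal antichains $D_{i,\alpha}$ of finite conditions and finish purely combinatorially: given a purported counterexample $p$, you find a single $\alpha^{\star}$ in the $\cal{U}$-large set $Y_{i,p}$ whose tail avoids $\dom(p)$, and transport the compatible witness $q^{\star}\in D_{i,\alpha^{\star}}$ along the order isomorphisms to almost every $\alpha_n$. This is more elementary (no category or fiber argument) and makes the role of the finiteness of Cohen conditions more transparent, at the cost of carrying the whole antichain through the uniformization --- which your count of isomorphism types ($\leq 2^{\aleph_0}<\kappa$) handles correctly. Your recursive construction of $B$ from the stabilization of $\alpha\mapsto S_{i,\alpha}\cap\gamma$ plays the role of the paper's dichotomy clause (b), to the same effect.
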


\begin{proof}
It is clear that $\cal{J}$ is a $\kappa$-complete uniform ideal on $\lambda$. Suppose $\theta < \kappa$ and $\langle \name{A}_i: i < \theta \rangle$ is a sequence of $\cal{J}$-positive sets in $V^{\bb{P}}$. WLOG, assume that the empty condition forces this. For $i < \theta$ and $\alpha < \lambda$, let $p_{i, \alpha} = [[\alpha \in \name{A}_i]]_{\bb{P}}$. Note that for each $i < \theta$, and $Z \in \cal{U}$, $\{p_{i, \alpha}: \alpha \in Z\}$ is predense in $\bb{P}$ since otherwise some condition will force $\name{A}_i \in \cal{J}$. Since $\cal{U}$ is $\kappa$-complete, we can choose $X \in \cal{U}$ such that for every $i < \theta$ and $\alpha \in X$, $p_{i, \alpha} > 0_{\bb{P}}$. Let $S_{i, \alpha} \in [\kappa]^{\aleph_0}$ be a support of $p_{i, \alpha}$. Since $\kappa$ is inaccessible, we can choose $Y \subseteq X$ such that $Y \in \cal{U}$ and for each $i < \theta$, the following hold.

\begin{itemize}

\item[(a)] For every $\alpha, \beta \in Y$, $(S_{i, \alpha}, 2^{S_{i, \alpha}}, p_{i, \alpha}) \cong (S_{i, \beta}, 2^{S_{i, \beta}}, p_{i, \beta})$. Put $\otp(S_{i, \alpha}) = \gamma_i$. Let $h_{i, \alpha}:\gamma_i \to S_{i, \alpha}$ be the order isomorphism and define $H_{i, \alpha}: 2^{\gamma_i} \to 2^{S_{i, \alpha}}$ by $H_{i, \alpha}(x) = x \circ h^{-1}_{i, \alpha}$. Choose $p_i \subseteq 2^{\gamma_i}$ such that $H_{i, \alpha}[p_i] = p_{i, \alpha}$.

\item[(b)] For each $\gamma < \gamma_i$, either $|\{h_{i, \alpha}(\gamma): \alpha \in Y\}| = 1$ or for every $Z \in \cal{U}$, $|\{h_{i, \alpha}(\gamma): \alpha \in Z \cap Y\}| \geq \kappa$. Put $\Gamma_i = \{\gamma < \gamma_i: |\{h_{i, \alpha}(\gamma): \alpha \in Y\}| = 1 \} $ and $h_{i, \alpha}[\Gamma_i] = R_i$. 

\end{itemize}

Define $$B_{i, \alpha} = \{x \in 2^{R_i}: \{y \res (S_{i, \alpha} \setminus R_i): y \in p_{i, \alpha} \wedge y \res R_i = x \} \text{ is meager}\}.$$

Then $B_{i, \alpha} = B_i$ does not depend on $\alpha \in Y$ and $B_i$ is meager in $2^{R_i}$ since otherwise $\{p_{i, \alpha}: \alpha \in Y\}$ will not be predense in $\bb{P}$. \\

Using (b), choose $B \in [Y]^{\aleph_0}$ such that for every $i < \theta$ and $\alpha \neq \beta$ in $B$, $S_{i, \alpha} \cap S_{i, \beta} = R_i$. It follows now that for every $i < \theta$, $\{p_{i, \alpha}: \alpha \in B\}$ is predense in $\bb{P}$. Hence $\Vdash (\forall i < \theta)(B \cap \name{A}_i \neq \emptyset)$. \end{proof}

\textbf{Proof of Theorem \ref{t31}}: Let $V \models ``\cont = \omega_1$ and $\kappa$ is the least measurable cardinal". Let $\bb{P} = \Cohen_{\kappa}$. We already know that there is a normal supersaturated ideal on $\kappa = \cont$ in $V^{\bb{P}}$. Let us check that, $V^{\bb{P}} \models $ ``Every $\omega_1$-saturated $\sigma$-ideal is supersaturated". By Lemma \ref{l32}, it suffices to consider ideals $\cal{J}$ that satisfy the following for some $\omega_1 \leq \mu \leq \lambda$.

\begin{itemize}

\item[(i)] $\cal{J}$ is a uniform ideal on $\lambda$, 

\item[(ii)] for every $X \in \cal{J}^+$, $\add(\cal{J} \res X) = \mu$ and 

\item[(ii)] $\cal{J}$ is $\omega_1$-saturated.  

\end{itemize}

Since $V^{\bb{P}} \models \cont = \kappa$, we can assume that $\mu \leq \kappa$. Otherwise there is a countable partition $\cal{E}$ of $\lambda$ into $\cal{J}$-positive sets such that for each $X \in \cal{E}$, $\cal{J} \res X$ is a $\mu$-complete prime ideal and it easily follows that $\cal{J}$ is supersaturated. \\

Towards a contradiction, suppose $\mu < \kappa$. Then, in $V^{\bb{P}}$, there is a $\mu$-additive $\omega_1$-saturated ideal $\cal{K}$ on $\mu$. Put $\cal{K}' = \{X \subseteq \mu: 1_{\bb{P}} \Vdash X \in \cal{K}\}$. Since $\bb{P}$ is ccc, $V \models \cal{K}$ is a $\mu$-additive $\omega_1$-saturated ideal on $\mu$. As $V \models \mu > \omega_1 = \cont$, $\mu$ is measurable in $V$.  But $\kappa$ is the least measurable cardinal in $V$. Hence $\mu \geq \kappa$: Contradiction. \\

So we must have $\mu = \kappa$. Let $\cal{I} = \{Y \subseteq \lambda: 1_{\bb{P}} \Vdash X \in \cal{J}\}$. By Lemma \ref{l33}, there is a countable partition $\cal{F}$ of $\lambda$ such that for each $X \in \cal{F}$, $\cal{I} \res X$ is a $\kappa$-complete prime ideal on $\lambda$. For each $X \in \cal{F}$, let $\cal{I}_X$ be the ideal generated by $\cal{I} \res X$ in $V^{\bb{P}}$. By Lemma \ref{l34}, for every $\cal{A} \subseteq \cal{I}_X^+$, if $|\cal{A}| < \kappa$, then there is a countable set that meets every member of $\cal{A}$. Since $\cal{I}_A \subseteq \cal{J} \res A$ and $\add(\cal{J} \res A) = \kappa$, it follows that $\cal{J} \res A$ is supersaturated for each $A \in \cal{F}$. Since $\cal{F}$ is a countable, partition of $\lambda$, it follows that $\cal{J}$ is also supersaturated. \qed

\section{Killing supersaturated ideals}
\label{s4}

\begin{defn}
\label{d41}
Suppose $\delta < \omega_1$ is indecomposable and $\kappa$ is an infinite cardinal. Let $\bb{Q}^{\kappa}_{\delta}$ consist of all countable partial maps from $\kappa$ to $2$ such that

\begin{itemize}

\item[(1)] $\otp(\dom(p)) < \delta$ and

\item[(2)] $\{\xi \in \dom(p): p(\xi) = 1\}$ is finite. 

\end{itemize} 
For $p, q \in \bb{Q}^{\kappa}_{\delta}$ define $p \leq q$ iff $q \subseteq p$. Let $\bb{P}_{\kappa}$ be the finite support product of $\{Q^{\kappa}_{\delta}: \delta < \omega_1, \delta \text{ indecomposable}\}$.
\end{defn}

\begin{lem}
\label{l42} Let $\bb{P}_{\kappa}$ be as in Definition \ref{d41}.
\begin{itemize}
\item[(1)] $\bb{P}_{\kappa}$ is ccc.
\item[(2)] If $\kappa \geq \omega_1$, then $\bb{P}_{\kappa}$ is not $\sigma$-finite-cc.
\end{itemize}

\end{lem}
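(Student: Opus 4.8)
For part (1), the strategy is the standard $\Delta$-system argument adapted to the special nature of the conditions. A condition in $\bb{P}_\kappa$ is a finite function from the index set $\{\delta < \omega_1 : \delta \text{ indecomposable}\}$ assigning to each $\delta$ in its (finite) domain a condition $p(\delta) \in \bb{Q}^\kappa_\delta$; and each $p(\delta)$ is a countable partial map from $\kappa$ to $2$ with order type of domain $<\delta$ and only finitely many $1$'s. Given $\omega_1$-many conditions, first thin out so that they all have the same finite index-domain $\{\delta_0,\dots,\delta_{m-1}\}$ (there are only countably many such finite sets), and so that for each coordinate $\delta_j$ the \emph{finite} set $E_j(p) = \{\xi \in \dom(p(\delta_j)) : p(\delta_j)(\xi) = 1\}$ forms a $\Delta$-system in $\xi$ with fixed root and fixed values there — this is a finitary $\Delta$-system lemma applied coordinatewise. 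The key observation is that two conditions $p, q$ that agree on the roots of all these finite sets are \emph{compatible}: the common extension is obtained coordinatewise by $p(\delta_j) \cup q(\delta_j)$ — this is a legitimate element of $\bb{Q}^\kappa_{\delta_j}$ because the $1$-set stays finite (union of two finite sets agreeing on overlap), and the countable union of countable domains is countable, so only condition (1), $\otp(\dom) < \delta_j$, needs care. This is exactly the point where indecomposability of $\delta_j$ is used: $\dom(p(\delta_j)) \cup \dom(q(\delta_j))$ is a union of two sets each of order type $< \delta_j$, and for an indecomposable $\delta_j$ a union of two (indeed finitely many) sets of order type $<\delta_j$ still has order type $<\delta_j$. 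So after thinning we get an uncountable linked set, hence $\bb{P}_\kappa$ is ccc; in fact this shows $\bb{P}_\kappa$ is $\sigma$-$k$-linked for no fixed $k$ but still ccc via the $\Delta$-system.

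For part (2), the plan is to exhibit, assuming $\kappa \geq \omega_1$, an antichain structure that cannot be covered by countably many finite-cc pieces. Recall that $\bb{P}$ is $\sigma$-finite-cc iff $\bb{P} = \bigcup_{n} P_n$ where each $P_n$ has no infinite antichain, equivalently each $P_n$ is $k_n$-linked-like in the finitary Ramsey sense. Work inside a single coordinate $\delta = \omega^\omega$ (or any infinite indecomposable $\delta < \omega_1$; the point is that $\delta$ is a limit of order types so the domains can get arbitrarily long below $\delta$) and inside a fixed copy of $\omega_1 \leq \kappa$. The obstruction is that for every $n$ one can find, inside the putative piece $P_n$, an $n$-element set of pairwise incompatible conditions all having the \emph{same} domain $D$ (of some order type $<\delta$) but with pairwise distinct finite $1$-sets that pairwise disagree somewhere on their common domain — incompatibility here comes precisely from a coordinate $\xi$ where one condition says $0$ and another says $1$. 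Using $\kappa \geq \omega_1$ there is enough room to fit, for each $n$, such a configuration, and a pigeonhole/fusion argument then produces a single piece $P_n$ containing an infinite antichain, contradicting finite-cc of that piece. More precisely, I would fix a tree of conditions: at level $n$ put conditions deciding more and more of the domain, arranged so that any two at the same level that branched are incompatible; by $\sigma$-finite-cc one level's worth of branches must mostly land in one $P_n$, and iterating down the tree (this is where $\kappa \geq \omega_1$ feeds in, to keep the tree from terminating) yields an infinite antichain inside a single $P_n$.

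The main obstacle is part (2): making the incompatibility-versus-finiteness tension precise. The subtlety is that conditions in $\bb{Q}^\kappa_\delta$ have a \emph{countable} domain but only \emph{finitely many} $1$'s, so incompatibility must be witnessed by a $0$-versus-$1$ clash rather than by two $1$'s — this is deliberate, and it means the antichains one builds must carefully exploit the $0$-coordinates of the domain. I would make this rigorous by choosing, for the single coordinate $\delta$ and for each $n$, a condition $r_n$ whose domain is an initial segment (in a fixed $\omega_1$-enumeration) of order type $\omega \cdot n$ say, all values $0$, together with its $n$ ``one-point activations'' $r_n^j = r_n$ with the $j$-th coordinate flipped to $1$ at a \emph{new} point just past $\dom(r_n)$: then $\{r_n^0,\dots,r_n^{n-1}\}$ need not be an antichain directly, so instead I would take conditions that flip a coordinate \emph{inside} an overlapping domain, forcing genuine $0/1$ conflicts. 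Getting the combinatorics of this tree exactly right — so that $\sigma$-finite-cc really fails rather than merely $\sigma$-$n$-linked for each $n$ — is the crux, and I expect it to require a short but careful Ramsey-type argument on the tree of finite approximations.
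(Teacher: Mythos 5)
Your argument for part (1) rests on a false key step. In $\bb{Q}^{\kappa}_{\delta}$ incompatibility is witnessed by a point where one condition takes the value $1$ and the other takes the value $0$; a $\Delta$-system on the \emph{finite} $1$-sets does nothing to prevent a petal point of $p$'s $1$-set from lying inside the \emph{countable} $0$-part of $q$'s domain, so ``agree on the roots of the $1$-sets'' does not imply compatibility and $p(\delta_j)\cup q(\delta_j)$ need not even be a function. Concretely, take $p_{\alpha}\in\bb{Q}^{\omega_1}_{\omega^2}$ with $p_{\alpha}(\alpha)=1$ and $p_{\alpha}$ identically $0$ on some $C_{\alpha}\subseteq\alpha$ of order type $\omega$: the $1$-sets are pairwise disjoint singletons, yet $p_{\alpha}\perp p_{\beta}$ whenever $\beta\in C_{\alpha}$. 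Since the $0$-parts are countable, they cannot be $\Delta$-systemized across $\omega_1$ conditions, and this is exactly the difficulty. The paper's proof additionally thins so that for $i<j$ the $1$-set of the later condition avoids the $0$-part of the earlier one, and then kills the remaining incompatibilities by a counting argument: if $p_j(\delta)\perp p_i(\delta)$ for all $i$ in a suitable set $C$, then the $0$-part $X_{j,\delta}$ must contain a transversal of the pairwise disjoint petals $\{s_{i,\delta}\setminus s_{\delta}: i\in C\}$, and for a well-chosen $j$ and $\delta$ every such transversal has order type $\geq\delta$, contradicting $\otp(X_{j,\delta})<\delta$. That order-type computation is the heart of (1) and is entirely absent from your sketch; ironically, you identify the $0$-versus-$1$ issue explicitly in your discussion of part (2) but do not apply it to part (1). (Your use of indecomposability to close compatible pairs under unions is correct.)

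For part (2) you give a plan, not a proof, and you say yourself that the crux --- arranging the tree so that $\sigma$-finite-cc genuinely fails --- is unresolved. Note that the cheap infinite antichains you gesture at (all conditions with a fixed infinite domain $D$, or $n$ pairwise incompatible conditions per $n$) do not refute $\sigma$-finite-cc, since a countable antichain can be distributed over the pieces $W_n$ with finitely many in each; one must diagonalize against the decomposition itself. The paper does this inside the single coordinate $\bb{Q}^{\omega_1}_{\omega^2}$: recursively choose in each $W_n$ a maximal, hence finite, antichain $A_n$ of conditions $p$ with $\max(\dom(p))=\gamma_p$ lying above all ordinals used at earlier stages and $p(\gamma_p)=1$; then the condition $p_{\star}$ with domain $\{\gamma_p: p\in\bigcup_n A_n\}\cup\{\gamma_{\star}\}$ (order type $\leq\omega+1<\omega^2$) taking value $1$ only at a fresh $\gamma_{\star}$ is incompatible with every member of every $A_n$ via a $0$-versus-$1$ clash at $\gamma_p$, so it properly extends whichever antichain $A_n$ its piece carries, contradicting maximality. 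Your tree/fusion outline never explains how an infinite antichain is forced into a single piece, so as written neither part of the lemma is established.
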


\begin{proof}

(1) Towards a contradiction, suppose $A = \{p_i: i < \omega_1\}$ is an uncountable antichain in $\bb{P}_{\kappa}$. Put $D_i = \dom(p_i)$. By passing to an uncountable subset of $A$, we can assume that $D_i$'s form a $\Delta$-system with root $D$. For each $\delta \in D$ and $i < \omega_1$, put $s_{i, \delta} = \{\gamma: p_i(\delta)(\gamma) = 1\}$ and $X_{i, \delta} = \{\gamma: p_i(\delta)(\gamma) = 0\}$. Note that $\otp(X_{i, \delta}) < \delta$. Choose $B \in [A]^{\omega_1}$ such that for each $\delta \in D$, $\langle s_{i, \delta}: i \in B \rangle$ is a $\Delta$-system with root $s_{\delta}$ and for every $i < j$ in $B$, $\displaystyle s_{j, \delta} \cap X_{i, \delta} = \emptyset$. 

Choose $j \in B$ and $\delta \in D$ such that letting $C = \{i \in B \cap j: p_i(\delta) \perp_{\bb{Q}_{\delta}} p_j(\delta) \}$, every transversal of $\{s_{i, \delta} \setminus s_{\delta}: i \in C\}$ has order type $\geq \delta$. Now observe that $X_{j, \delta}$ has to meet $s_{i, \delta} \setminus s_{\delta}$ for every $i \in C$. Hence $ \otp(X_{j, \delta}) \geq \delta$: Contradiction.  \\

(2) It is enough to show that $\bb{Q} = \bb{Q}^{\omega_1}_{\omega^2}$ is not $\sigma$-finite-cc. Towards a contradiction, suppose $\bb{Q} = \bigsqcup_{n < \omega} W_n$ where no $W_n$ has an infinite antichain.  Choose $\langle A_n : n < \omega \rangle$ as follows.

\begin{itemize}
\item[(a)] $A_0 \subseteq W_0$ is a maximal antichain of conditions $p$ such that $\max(\dom(p)) = \gamma_p$ exists and $p(\gamma_p) = 1$.  Define $\gamma_0 = \max(\{\gamma_p: p \in A_0\})$.

\item[(b)] $A_{n+1} \subseteq W_{n+1}$ is a maximal antichain of conditions $p \in W_{n+1}$ such that $\max(\dom(p)) = \gamma_p$ exists, $\gamma_p > \gamma_n$ and $p(\gamma_p) = 1$.  If $A_{n+1} \neq \emptyset$,  define $\gamma_{n+1} = \max(\{\gamma_p: p \in A_{n+1}\})$.  Otherwise,  $\gamma_{n+1} = \gamma_n$.

\end{itemize}

Put $A = \bigcup_{n < \omega} A_n$ and $\gamma = \sup(\{\gamma_n: n < \omega\})$.  Fix $\gamma_{\star} \in (\gamma,  \omega_1)$.  Let $p_{\star}$ be defined by $\dom(p_{\star}) = \{\gamma_p: p \in A\} \cup \{\gamma_{\star}\}$ and for every $\xi \in \dom(p_{\star})$,  $p(\xi) = 1$ iff $\xi = \gamma_{\star}$.  Note that $\otp(\dom(p)) \leq \omega + 1 < \omega^2$ and hence $p_{\star} \in \bb{Q}$. Choose $n < \omega$ such that $p_{\star} \in W_n$.  But now $A_n \cup \{p_{\star}\} \subseteq W_n$ is an antichain which contradicts the maximality of $A_n$.\end{proof}

\begin{thm}
\label{t43}
Suppose $\omega_1 \leq \kappa \leq \lambda$, $\cal{I}$ is an $\omega_1$-saturated uniform ideal on $\lambda$ and $\add(\cal{I}) = \kappa$. Let $\bb{P}_{\kappa}$ be as in Definition \ref{d41}. Let $\cal{J}$ be the ideal generated by $\cal{I}$ in $V^{\bb{P}_{\kappa}}$. Then there exists $\cal{A} \subseteq \cal{J}^+$ such that $|\cal{A}| = \omega_1$ and there is no countable set that meets every member of $\cal{A}$. Hence $V^{\bb{P}_{\kappa}} \models \cal{J}$ is an $\omega_1$-saturated $\kappa$-complete uniform ideal on $\lambda$ which is not supersaturated.
\end{thm}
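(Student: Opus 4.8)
The plan is to build the $\omega_1$ many positive sets out of the generic functions added in the coordinates of $\bb{P}_\kappa$. Since $\add(\cal{I})=\kappa$, fix an increasing sequence $\langle I_\alpha:\alpha<\kappa\rangle$ of members of $\cal{I}$ with $Y:=\bigcup_{\alpha<\kappa}I_\alpha\in\cal{I}^+$, and define $f:Y\to\kappa$ by $f(\xi)=\min\{\alpha:\xi\in I_\alpha\}$. The point of $f$ is that $f^{-1}[\beta]\subseteq I_\beta\in\cal{I}$ for every $\beta<\kappa$; consequently, whenever $Z\subseteq Y$ and $Z\in\cal{I}^+$, the set $f[Z]$ is unbounded in $\kappa$ and hence $\otp(f[Z])=\kappa$ (as $\kappa=\add(\cal{I})$ is regular). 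Let $\Delta=\{\delta<\omega_1:\delta\text{ indecomposable}\}$. For $\delta\in\Delta$ let $g_\delta$ be the partial function $\kappa\to 2$ added by the $\delta$-th coordinate of $\bb{P}_\kappa$, put $A_\delta=\{\gamma<\kappa:g_\delta(\gamma)=1\}$, and set $B_\delta=f^{-1}[A_\delta]=\{\xi\in Y:g_\delta(f(\xi))=1\}$. I will take $\cal{A}=\{B_\delta:\delta\in\Delta\}$.

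First I would verify that $\bb{P}_\kappa\Vdash B_\delta\in\cal{J}^+$ for every $\delta\in\Delta$. Since $\cal{I}$ is $\sigma$-complete and $\bb{P}_\kappa$ is ccc (Lemma \ref{l42}(1)), a countable union of ground-model members of $\cal{I}$ is covered by a single one, so $B_\delta\in\cal{J}^+$ is equivalent to: $B_\delta\not\subseteq I$ for every $I\in\cal{I}\cap V$. Fix such an $I$ and put $Z=f[Y\setminus I]$; then $Y\setminus I\in\cal{I}^+$, so $\otp(Z)=\kappa>\delta$. A density argument in the $\delta$-th coordinate shows $A_\delta\cap Z\ne\emptyset$ is forced: given any condition whose $\delta$-coordinate is $p$, since $\otp(\dom(p))<\delta<\otp(Z)$ we may pick $\gamma\in Z\setminus\dom(p)$ and extend $p$ by $p(\gamma)=1$ — this is still a condition because $\delta$ is a limit ordinal (so the order type of the domain stays below $\delta$) and the number of $1$'s stays finite. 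Then $B_\delta$ meets $f^{-1}[Z]\subseteq Y\setminus I$, so $B_\delta\not\subseteq I$. A similar mutual-genericity argument (forcing $g_\delta$ and $g_{\delta'}$ to take different values at some $\gamma\in\rng(f)$ lying outside the relevant coordinate domains) shows the $B_\delta$ are pairwise distinct, so $|\cal{A}|=\omega_1$.

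The main step — and the place I expect the real work — is to show no countable $W$ meets every member of $\cal{A}$. Suppose $G$ is $\bb{P}_\kappa$-generic and $W\in V[G]$ is countable with $W\cap B_\delta\ne\emptyset$ for all $\delta\in\Delta$; let $C=f[W\cap Y]$, a countable subset of $\kappa$ in $V[G]$. Because $\bb{P}_\kappa$ is a \emph{finite support product} and ccc, a nice name for an enumeration of $C$ involves only countably many coordinates, so $C\in V[G\res E]$ for some countable $E\subseteq\Delta$, and $\bb{P}_\kappa$ factors as $(\bb{P}_\kappa\res E)\times(\bb{P}_\kappa\res(\Delta\setminus E))$ with the second factor generic over $V[G\res E]$. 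Working in $V[G\res E]$, let $\beta=\otp(C)<\omega_1$. I claim $\bb{P}_\kappa\res(\Delta\setminus E)$ forces that $A_\delta\cap C=\emptyset$ for some $\delta\in\Delta\setminus E$: given any condition $r$ there, the set $E\cup\supp(r)$ is countable while there are uncountably many indecomposable ordinals below $\omega_1$ exceeding $\beta$, so choose $\delta^\star\in\Delta\setminus(E\cup\supp(r))$ with $\delta^\star>\beta$; since $\otp(C)=\beta<\delta^\star$ and the function $C\times\{0\}$ has no $1$'s, it is a legitimate condition in $\bb{Q}^{\kappa}_{\delta^\star}$, so we may strengthen $r$ by inserting $C\times\{0\}$ in its $\delta^\star$-th coordinate, and this extension forces $g_{\delta^\star}\res C\equiv 0$, i.e. $A_{\delta^\star}\cap C=\emptyset$. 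By density there is such a $\delta^\star\in\Delta\setminus E\subseteq\Delta$ in $V[G]$; but then for every $\xi\in W\cap Y$ we have $f(\xi)\in C$ and $g_{\delta^\star}(f(\xi))=0$, so $\xi\notin B_{\delta^\star}$, and since $B_{\delta^\star}\subseteq Y$ this gives $W\cap B_{\delta^\star}=\emptyset$, contradicting the choice of $W$.

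Finally, $\bb{P}_\kappa$ is ccc by Lemma \ref{l42}(1), so the $\omega_1$-saturation of $\cal{I}$ passes up to $\cal{J}$ (by the argument in Lemma \ref{l33}), and $\cal{J}$ is $\kappa$-complete and uniform since in the ccc extension every set of size $<\kappa$ (resp.\ $<\lambda$) is covered by one of the same size in $V$, on which $\cal{I}$ is $\kappa$-complete (resp.\ uniform). Since $\cal{A}$ has size $\omega_1<\add(\cal{J})=\kappa$ and admits no countable transversal, $\cal{J}$ is an $\omega_1$-saturated $\kappa$-complete uniform ideal on $\lambda$ that is not supersaturated. The main obstacle, as indicated, is the third paragraph: one has to see that $C$ reflects into a countable subproduct and that the trivial-looking condition $C\times\{0\}$, placed in an unused coordinate $\delta^\star>\otp(C)$, kills the intersection with $A_{\delta^\star}$ — this is exactly the interplay between the ``order type $<\delta$'' restriction defining $\bb{Q}^{\kappa}_\delta$ and the fact that $\delta$ ranges over cofinally many ordinals below $\omega_1$.
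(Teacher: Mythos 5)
Your proof is correct and follows essentially the same route as the paper: your sets $B_\delta$ are exactly the paper's witnessing family (with $W_\alpha = f^{-1}(\{\alpha\})$ playing the role of their partition of a positive set witnessing $\add(\cal{I})=\kappa$), and the key density argument --- inserting a condition that is constantly $0$ on a countable set of indices at a fresh indecomposable coordinate $\delta^\star$ exceeding its order type --- is identical. The only difference is in the bookkeeping for the putative countable transversal: where you factor $\bb{P}_\kappa$ through a countable subproduct to capture $C$ in an intermediate model, the paper simply covers the countable set by a ground-model countable set (possible since $\bb{P}_\kappa$ is ccc) and runs the single density argument in $V$.
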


\begin{proof}

As $\bb{P}_{\kappa}$ is ccc, it is easy to see that in $V^{\bb{P}_{\kappa}}$, $\cal{J}$ is an $\omega_1$-saturated $\kappa$-complete uniform ideal on $\lambda$. So it suffices to show that in $V^{\bb{P}_{\kappa}}$, there exists $\cal{A} \subseteq \cal{J}^+$ such that $|\cal{A}| = \omega_1$ and there is no countable set that meets every member of $\cal{A}$.\\

Since $\add(\cal{I}) = \kappa$, we can fix $Y \in \cal{I}^+$ and a partition $Y = \bigsqcup_{\alpha < \kappa} W_{\alpha}$ such that for each $\Gamma \in [\kappa]^{< \kappa}$, $\bigcup_{\alpha \in \Gamma} W_{\alpha} \in \cal{I}$. Let $G$ be $\bb{P}_{\kappa}$-generic over $V$. Let $G_{\delta} = \{p(\delta): p \in G\}$. So $G_{\delta}$ is $\bb{Q}_{\delta}$-generic over $V$. Define $\name{A}_{\delta} \in V^{\bb{P}_{\kappa}} \cap \cal{P}(\lambda)$ by 

$$\gamma \in \name{A}_{\delta} \iff (\exists p \in G)(p(\delta)(\alpha) = 1 \wedge \gamma \in W_{\alpha})$$   \\

Suppose $Y \in \cal{I}$ and $p \in \bb{P}_{\kappa}$ with $\delta \in \dom(p)$. Choose $\alpha < \kappa$ such that $W_{\alpha} \setminus Y \neq \emptyset$ and $\alpha \notin \dom(p(\delta))$. Let $q \leq p$ be such that $q(\delta)(\alpha) = 1$. Then $q \Vdash_{\bb{P}_{\kappa}} \name{A}_{\delta} \setminus Y \neq \emptyset$. Hence $\Vdash_{\bb{P}_{\kappa}} \name{A}_{\delta} \in \cal{J}^+$.  \\

Towards a contradiction suppose that in $V^{\bb{P}_{\kappa}}$, there is a countable $X \subseteq \lambda$ that meets each $\name{A}_{\delta}$. Since $\bb{P}$ satisfies ccc, we can assume that $X \in V$. Fix $p \in \bb{P}_{\kappa}$ such that $p \Vdash_{\bb{P}} (\forall \delta)(X \cap \name{A}_{\delta} \neq \emptyset)$. Put $W = \{\alpha < \kappa: W_{\alpha} \cap X \neq \emptyset\}$. So $W \subseteq \kappa$ is countable. Choose $\delta \in \omega_1 \setminus \dom(p)$ indecomposable such that $\delta > \otp(W)$. Define $q \in \bb{P}_{\kappa}$ by $\dom(q) = \dom(p) \cup \{\delta\}$, $q \res \dom(p) = p$ and $q(\delta) \in \bb{Q}_{\delta}$ is constantly zero on $W$. Then $q \leq p$ and $q \Vdash_{\bb{P}_{\kappa}} X \cap \name{A}_{\delta} = \emptyset$: Contradiction. It follows that $\cal{A} = \{A_{\delta}: \delta < \omega_1, \delta \text{ indecomposable}\}$ is as required.
\end{proof}

\begin{defn}
\label{d44}
Let $\langle (\bb{S}_i, \bb{R}_j): i \leq \kappa^+, j < \kappa \rangle$ be the finite support iteration defined by

\begin{itemize}

\item[(a)] $\bb{S}_0$ is the trivial forcing.

\item[(b)] For each $i < \kappa^+$, $V^{\bb{S}_i} \models \bb{R}_i = \bb{P}_{\kappa}$.  
 
\end{itemize}

\end{defn}

The next theorem shows how to kill all atomless supersaturated ideals.

\begin{thm}
\label{t45}
Suppose $V \models ``\cont = \omega_1$ and $\kappa$ is the least measurable cardinal with a witnessing normal prime ideal $\cal{I}$".  Put $\bb{S} = \bb{S}_{\kappa^+}$. Then the following hold in $V^{\bb{S}}$. 

\begin{itemize}

\item[(a)] $\cont = \kappa^+$ and the ideal generated by $\cal{I}$ is a normal $\omega_1$-saturated ideal on $\kappa$.
\item[(b)] Whenever $\cal{J}$ is a supersaturated ideal on a set $X$, there is a countable partition $\cal{F}$ of $X$ such that for each $A \in \cal{F}$, $\cal{J} \res A$ is a prime ideal. In particular, there is no supersaturated ideal on any cardinal $\leq \cont$.

\end{itemize}

\end{thm}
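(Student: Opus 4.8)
The plan is to treat (a) as routine cardinal arithmetic together with standard preservation facts, and to reduce (b), via the $(\star)_B$-surgery used in the proof of Lemma \ref{l32}, to the statement that $V^{\mathbb{S}}$ contains no atomless supersaturated ideal; that statement is then forced by the cofinally many copies of $\mathbb{P}_\kappa$ inside $\mathbb{S}$, essentially by the argument of Theorem \ref{t43}.

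For (a): By Lemma \ref{l42}(1) each $\mathbb{P}_\kappa$ is ccc, and $|\mathbb{P}_\kappa| = \kappa$, since each $\mathbb{Q}^\kappa_\delta$ has at most $\kappa^{\aleph_0} = \kappa$ conditions ($\kappa$ is inaccessible in $V$) and the finite support product runs over the $\leq \aleph_1 \leq \kappa$ indecomposable $\delta < \omega_1$. Hence $\mathbb{S} = \mathbb{S}_{\kappa^+}$ is ccc of size $\kappa^+$; cofinally many iterands add Cohen reals, so $\mathfrak{c} \geq \kappa^+$ in $V^{\mathbb{S}}$, while a nice-name count using $(\kappa^+)^{\aleph_0} = \kappa^+$ gives $\mathfrak{c} \leq \kappa^+$. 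That the ideal generated by $\mathcal{I}$ is a normal $\omega_1$-saturated ideal on $\kappa$ in $V^{\mathbb{S}}$ is the general fact about ccc extensions recalled in the Introduction. For (b), let $\mathcal{J}$ be supersaturated on $X$ in $V^{\mathbb{S}}$; WLOG $X = \lambda$ is a cardinal. Running the $(\star)_B$-argument from Lemma \ref{l32} inside $V^{\mathbb{S}}$, first take a maximal family $\mathcal{F}_0$ of pairwise disjoint $\mathcal{J}$-positive sets $B$ with $\mathcal{J} \restriction B$ prime; if $\lambda \setminus \bigcup \mathcal{F}_0 \in \mathcal{J}$ we are done (merge the remainder into one $B$). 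Otherwise $\mathcal{J}$ restricted to $\lambda \setminus \bigcup \mathcal{F}_0$ is atomless, and a further application of the $(\star)_B$-reduction yields an ideal $\mathcal{I}_0$ which is uniform on some $\lambda'$, atomless, $\omega_1$-saturated, supersaturated, with $\add(\mathcal{I}_0 \restriction C) = \mu$ for every $\mathcal{I}_0$-positive $C$ (in particular $\add(\mathcal{I}_0) = \mu$); it suffices to reach a contradiction.

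The first step is to pin down $\mu = \kappa$. Disjointifying a witness to $\add(\mathcal{I}_0) = \mu$ gives a positive $Y = \bigsqcup_{\xi<\mu} W_\xi$ with each $W_\xi \in \mathcal{I}_0$ and $\bigcup_{\xi\in\Gamma} W_\xi \in \mathcal{I}_0$ for $\Gamma \in [\mu]^{<\mu}$; pushing $\mathcal{I}_0 \restriction Y$ forward along $\xi \mapsto$ (the $\alpha$ with $\xi \in W_\alpha$) produces a $\mu$-complete $\omega_1$-saturated uniform proper ideal $\mathcal{K}$ on $\mu$. Ulam's theorem excludes $\mu = \aleph_1$ and $\mu = \kappa^+$. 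For $\aleph_2 \leq \mu \leq \kappa$: by ccc, $\mathcal{K}$ pushes down (as in Lemma \ref{l33}) to a $\mu$-complete $\omega_1$-saturated uniform proper ideal on $\mu$ in $V$, and Solovay's dichotomy for $\omega_1$-saturated ideals, together with $\mathfrak{c}^V = \omega_1$ and $\mu \geq \aleph_2$, forces $\mu$ to be measurable in $V$; since $\kappa$ is the least measurable, $\mu = \kappa$. (If $\mu > \kappa^+$, Solovay's dichotomy gives $\mu$ measurable in $V^{\mathbb{S}}$, hence in $V$ since $|\mathbb{S}| = \kappa^+ < \mu$; for the consistency statement one may take $V$ with no measurable above $\kappa$, so this case does not arise.) The second step is the kill. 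Now $\mathcal{K}$ is $\kappa$-complete, and its being uniform with $\add = \kappa$ is witnessed by the partition of $\kappa$ into singletons. For each $i < \kappa^+$, the generics of the copy of $\mathbb{P}_\kappa$ forced at stage $i$ produce, for each indecomposable $\delta < \omega_1$, the ``$1$-set'' $A^{(i)}_\delta \subseteq \kappa$; as in the proof of Theorem \ref{t43}, for every countable $Z \in V^{\mathbb{S}}$ there is an indecomposable $\delta$ with $A^{(i)}_\delta \cap Z = \emptyset$, so no countable set meets all of $\{A^{(i)}_\delta : \delta \text{ indecomposable}\}$, and moreover $A^{(i)}_\delta$ is not contained in any member of $\mathcal{K}$ lying in $V^{\mathbb{S}_i}$. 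Pulling the $A^{(i)}_\delta$ back under the pushforward map and invoking supersaturation of $\mathcal{I}_0$ (legitimate since $\aleph_1 < \kappa = \add(\mathcal{I}_0)$), for each $i$ there is an indecomposable $\delta_i$ with $A^{(i)}_{\delta_i} \in \mathcal{K}$. A pigeonhole over the $\kappa^+$ stages against the $\aleph_1$ indecomposable ordinals gives a fixed $\delta_\star$ and $I \in [\kappa^+]^{\kappa^+}$ with $A^{(i)}_{\delta_\star} \in \mathcal{K}$ for all $i \in I$; since $|\mathcal{K}| \leq 2^\kappa = \kappa^+$ and each $A^{(i)}_{\delta_\star}$ is $\mathbb{Q}^\kappa_{\delta_\star}$-generic over $V^{\mathbb{S}_i}$ (so it escapes every member of $\mathcal{K}$ appearing by stage $i$), a density and counting argument set against the $\kappa$-completeness of $\mathcal{K}$ yields the contradiction.

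I expect the real work to sit in this last step: arranging the bookkeeping so that, although $\mathcal{I}_0$ and $\mathcal{K}$ need not appear at any bounded stage of the length-$\kappa^+$ iteration, enough of their structure — a positive set with a $\kappa$-partition witnessing $\add = \kappa$, together with the $\aleph_1$-sized obstructing family — can be confronted with the stage-$i$ copy of $\mathbb{P}_\kappa$ exactly as in Theorem \ref{t43}, and so that the resulting failure of supersaturation is visible inside $V^{\mathbb{S}}$ itself. The $\mu > \kappa^+$ subcase of the first step, which either needs a mild hypothesis on the ground model or a separate reflection, is the secondary delicate point.
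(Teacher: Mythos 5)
Your overall architecture (reduce to ruling out an atomless supersaturated ideal with constant additivity $\mu$, pin down $\mu=\kappa$ via Ulam, ccc pull-down and Solovay's dichotomy, then confront the ideal with the iterands $\bb{P}_{\kappa}$) tracks the paper up to the last step, but the last step --- which you yourself flag as where the real work sits --- has a genuine gap. Having shown that for each stage $i$ some $A^{(i)}_{\delta_i}$ falls \emph{into} $\cal{K}$, you pigeonhole to a fixed $\delta_\star$ and $\kappa^+$ many stages and then appeal to ``a density and counting argument set against the $\kappa$-completeness of $\cal{K}$''. No such argument is supplied, and I do not believe one exists along these lines: what you have produced is a $\kappa^+$-sequence of members of $\cal{K}$, each not covered by any member of $\cal{K}$ visible at an earlier stage. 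A $\kappa$-complete $\omega_1$-saturated ideal on $\kappa$ can perfectly well have cofinality $\kappa^+$ under $\subseteq$ (unions of $\kappa$ many members need not lie in the ideal --- the singletons already show this), so nothing contradictory follows. The direction is backwards: what is needed is a single stage at which \emph{all} the sets $A_\delta$ are $\cal{K}$-\emph{positive}, so that supersaturation of $\cal{K}$ collides with the no-countable-transversal property. The paper secures this with an ingredient absent from your proposal: form the trace $\cal{I}'=\{Z: 1_{\bb{S}}\Vdash Z\in\cal{K}\}$, a $\kappa$-additive prime ideal in $V$ (as $\kappa>\cont^V$); let $\cal{K}'$ be the ideal it generates in $V^{\bb{S}}$; by Fact \ref{f44} (two nested $\omega_1$-saturated ideals agree on a positive set) fix $B\in\cal{K}^+$ with $\cal{K}\res B=\cal{K}'\res B$; pick one stage $\gamma$ with $\name{B}\in V^{\bb{S}_{\gamma}}$ and apply Theorem \ref{t43} there. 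Since on $B$ the ideal $\cal{K}$ \emph{equals} the ideal generated from stage $\gamma$, the witnessing family is genuinely $\cal{K}$-positive and persists to $V^{\bb{S}}$ by the ccc of the tail. Your scheme never establishes $\cal{K}$-positivity of any $A^{(i)}_\delta$, which is exactly what the Fact \ref{f44} maneuver buys.

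A secondary point: by organizing the reduction around ``no atomless supersaturated ideal'' instead of around showing $\add(\cal{J})>\cont$, you are left with the case $\mu>\kappa^+$, which you propose to handle by assuming $V$ has no measurable above $\kappa$ --- that proves a weaker statement than the theorem. The case is in fact vacuous for the reason the paper exploits: a $\mu$-complete $\omega_1$-saturated ideal with $\mu>\cont$ is atomic on every positive set, so it cannot be the atomless $\cal{I}_0$ (equivalently, once $\add(\cal{J})>\cont$ the countable partition into prime pieces exists and there is nothing left to refute). The rest of your first step matches the paper's Case $\mu<\kappa$.
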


\begin{fact}
\label{f44}
Suppose $\cal{I}_1$, $\cal{I}_2$ are $\omega_1$-saturated $\sigma$-ideals on $X$ and $\cal{I}_1 \subseteq \cal{I}_2$. Then there is a partition $X = A \sqcup B$ such that $A \in \cal{I}_2$ and $\cal{I}_2 \res B = \cal{I}_1 \res B$.
\end{fact}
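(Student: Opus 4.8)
The plan is to build the set $A$ so that it absorbs every set on which $\cal{I}_1$ and $\cal{I}_2$ disagree, using the $\omega_1$-saturation of $\cal{I}_1$ to bound the number of such sets and the $\sigma$-completeness of $\cal{I}_2$ to keep their union inside $\cal{I}_2$.

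First I would let $\cal{D}$ be the collection of all $Z \subseteq X$ with $Z \in \cal{I}_2 \setminus \cal{I}_1$, and by Zorn's lemma fix a family $\cal{Z}$ of pairwise disjoint sets, each belonging to $\cal{D}$, that is maximal with respect to these properties. Every member of $\cal{Z}$ is $\cal{I}_1$-positive, and the members of $\cal{Z}$ are pairwise disjoint, so the $\omega_1$-saturation of $\cal{I}_1$ forces $\cal{Z}$ to be countable; write $\cal{Z} = \{Z_n : n < \omega\}$, allowing the family to be finite or empty (the empty case occurring exactly when $\cal{I}_1 = \cal{I}_2$). Put $A = \bigcup_{n < \omega} Z_n$ and $B = X \setminus A$. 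Since $\cal{I}_2$ is a $\sigma$-ideal and each $Z_n \in \cal{I}_2$, we get $A \in \cal{I}_2$, so $X = A \sqcup B$ is the desired partition, provided the restrictions agree on $B$.

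Next I would check that $\cal{I}_2 \res B = \cal{I}_1 \res B$. The inclusion $\cal{I}_1 \res B \subseteq \cal{I}_2 \res B$ is immediate from $\cal{I}_1 \subseteq \cal{I}_2$. For the reverse inclusion, suppose $Y \in \cal{I}_2 \res B$, i.e. $Y \cap B \in \cal{I}_2$; I must show $Y \cap B \in \cal{I}_1$. If not, then $Y \cap B \in \cal{I}_2 \setminus \cal{I}_1 = \cal{D}$, while $Y \cap B$ is disjoint from every $Z_n$ (since $Z_n \subseteq A$ and $Y \cap B \subseteq B = X \setminus A$), so $\cal{Z} \cup \{Y \cap B\}$ would be a strictly larger family of pairwise disjoint members of $\cal{D}$, contradicting the maximality of $\cal{Z}$. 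Hence $Y \cap B \in \cal{I}_1$ and $Y \in \cal{I}_1 \res B$, completing the proof.

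There is no genuinely hard step here; the only point requiring care is to notice that the antichain of "disagreement sets" lies in $\cal{I}_1^+$ rather than in $\cal{I}_2^+$, so it is the saturation of the \emph{smaller} ideal $\cal{I}_1$ that makes it countable, while it is the $\sigma$-completeness of the \emph{larger} ideal $\cal{I}_2$ that places the union $A$ in $\cal{I}_2$.
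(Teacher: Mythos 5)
Your proof is correct and is exactly the paper's argument: the paper's one-line proof is ``Take $A$ to be the union of a maximal family of pairwise disjoint sets in $\cal{I}_2 \setminus \cal{I}_1$,'' and you have simply supplied the routine verifications (countability of the family via $\omega_1$-saturation of $\cal{I}_1$, $A \in \cal{I}_2$ via $\sigma$-completeness, and the maximality argument on $B$). Your closing observation that only the saturation of the smaller ideal is actually used is accurate.
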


\begin{proof}
Take $A$ to be the union of a maximal family of pairwise disjoint sets in $\cal{I}_2 \setminus \cal{I}_1$.
\end{proof}

\textbf{Proof of Theorem} \ref{t45}: Clause (a) is easy to check. Let us prove Clause (b). Suppose $\cal{J}$ is a supersaturated ideal on $X$. Put $\mu = \add(\cal{J})$. It suffices to show that $V^{\bb{S}} \models \mu > \cont$ since this would imply that $\cal{J}$ is nowhere atomless and hence there is a countable partition of $X$ into $\cal{J}$-positive sets such that the restriction of $\cal{J}$ to each of them is a prime ideal. Towards a contradiction, assume $V^{\bb{S}} \models \mu \leq \cont$. Fix $Y \in \cal{J}^+$ and a partition $Y = \bigsqcup_{\alpha < \mu} W_{\alpha}$ such that for every $\Gamma \in [\mu]^{< \mu}$, $\bigcup_{\alpha \in \Gamma} W_{\alpha} \in \cal{J}$. Define $$\cal{K} = \{\Gamma \subseteq \mu: \bigcup_{\alpha \in \Gamma} W_{\alpha} \in \cal{J}\}$$ 

Then $\cal{K}$ is a $\mu$-additive ideal on $\mu$. We claim that $\cal{K}$ must also be supersaturated. To see this, suppose $\cal{A} \subseteq \cal{K}^+$ and $|\cal{A}| < \mu$. For each $A \in \cal{A}$, define $Y_A = \bigsqcup_{\alpha \in A} W_{\alpha}$. Then $\{Y_A: A \in \cal{A}\} \subseteq \cal{J}^+$. Since $\cal{J}$ is supersaturated, we can choose a countable $T \subseteq Y$ that meets each $Y_A$. Let $B = \{\alpha < \mu: Y \cap W_{\alpha} \neq \emptyset\}$. Then $B \subseteq \mu$ is countable and it meets every $A \in \cal{A}$. Hence $\cal{K}$ is supersaturated.  WLOG, assume that the empty condition in $\bb{S}$ forces all of this about $\cal{K}$.  \\

Since $V^{\bb{S}} \models ``\mu \leq \cont = \kappa^+$ and $\mu$ is weakly inaccessible", we must have $\mu \leq \kappa$.  We consider two cases. \\

Case $\mu < \kappa$: In $V$, define $\cal{I}' = \{X \subseteq \mu: 1_{\bb{S}} \Vdash X \in \cal{K}\}$. Since $\bb{S}$ is ccc, $V \models \cal{I}'$ is a $\mu$-additive $\omega_1$-saturated ideal on $\mu$. As $V \models \mu > \omega_1 = \cont$, $\mu$ is measurable in $V$. Since $\kappa$ is the least measurable cardinal in $V$, $\mu \geq \kappa$: Contradiction. \\

Case $\mu = \kappa$: In $V$, define $\cal{I}' = \{X \subseteq \kappa: 1_{\bb{S}} \Vdash X \in \cal{K}\}$. Since $V \models \kappa > \cont = \omega_1$, we must have $V \models \cal{I}'$ is a $\kappa$-additive prime ideal on $\kappa$. Let $\cal{K}'$ be the ideal generated by $\cal{I}'$ in $V^{\bb{S}}$. Then $V^{\bb{S}} \models \cal{K}' \subseteq \cal{K}$ are $\omega_1$-saturated $\kappa$-additive ideals on $\kappa$. Using Fact \ref{f44}, fix $B \in \cal{K}^+$ such that $\cal{K}' \res B = \cal{K} \res B$. 

Choose $\gamma < \kappa^+$ such that $\name{B} \in V^{\bb{S}_{\gamma}}$. Let $\cal{K}''$ be the ideal generated by $\cal{I}'$ in $V^{\bb{S}_{\gamma}}$. By Theorem \ref{t43}, it follows that in $V^{\bb{S}_{\gamma + 1}}$, the ideal generated by $\cal{K}'' \res B$ is not supersaturated. Now observe that $\cal{K} \res B = \cal{K}' \res B$ is the ideal generated by $\cal{K}'' \res B$ in $V^{\bb{S}}$. It follows that $\cal{K}$ is not a supersaturated ideal: Contradiction.  \qed \\

Using some results about separating families and supersaturated ideals from \cite{HLRS, KR}, we can also get the following.

\begin{thm}
Suppose $\kappa$ is a measurable cardinal with a witnessing normal prime ideal $\cal{I}$. Let $\bb{P}_{\kappa}$ be the forcing in Definition \ref{d41}. Then the following hold in $V^{\bb{P}_{\kappa}}$.

\begin{itemize}

\item[(a)] $\cont = \kappa$ and the ideal generated by $\cal{I}$ is a normal $\omega_1$-saturated ideal on $\kappa$.

\item[(b)] There is a family $\cal{F} \subseteq \cal{P}(\kappa)$ such that $|\cal{F}| = \omega_1$ and for every countable $X \subseteq \kappa$ and $\alpha \in \kappa \setminus X$, there exists $S \in \cal{F}$ such that $\alpha \in S$ and $S \cap X = \emptyset$.

\item[(c)] The order dimension of Turing degrees is $\omega_1$.

\item[(d)] There are no atomless supersaturated ideals.

\end{itemize}

\end{thm}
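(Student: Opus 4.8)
The plan is to prove each of the four clauses in turn, leaning heavily on the structural facts already established, especially Theorem \ref{t43} (with $\kappa = \lambda = \omega_1$) and the standard theory of measure algebras, Cohen-type forcings, and separating families.

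For clause (a): Since each $\bb{Q}^{\kappa}_{\delta}$ is a countable partial-function forcing on $\kappa$ with a finiteness constraint on the $1$-coordinates, the finite support product $\bb{P}_{\kappa}$ has size $\kappa$ (using $\cont = \omega_1 \le \kappa$ in $V$ --- note we may as well assume GCH-like behavior below $\kappa$, or just that $\kappa^{<\kappa}=\kappa$, which holds since $\kappa$ is measurable hence inaccessible). Since $\bb{P}_{\kappa}$ is ccc by Lemma \ref{l42}(1) and has a dense subset of size $\kappa$, it adds exactly $\kappa$ reals and forces $\cont = \kappa$; moreover the components $\bb{Q}^{\kappa}_{\delta}$ each add a new real (in fact a new subset of $\kappa$ of bounded order type), and the finite support product of $\omega_1$-many non-trivial ccc forcings adds Cohen reals, so $\cont$ stays $\kappa$ and does not jump. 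That the ideal generated by $\cal{I}$ remains a normal $\omega_1$-saturated ideal on $\kappa$ in $V^{\bb{P}_{\kappa}}$ is the standard fact recalled in the introduction: $\bb{P}_{\kappa}$ is ccc of size $\le \kappa$ and $\cal{I}$ is a normal prime ideal on the measurable $\kappa$, so normality and $\omega_1$-saturation are preserved and $\kappa \le \cont$ now. (This is exactly the situation in Theorem 4.9 of \cite{KR}, which we may invoke.)

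For clause (b): Apply Theorem \ref{t43} with $\kappa$ there replaced by $\omega_1$ and $\lambda$ there replaced by $\omega_1$, taking $\cal{I}$ to be the bounded ideal on $\omega_1$ (uniform, $\omega_1$-saturated --- indeed $\sigma$-complete and every cofinal set positive --- with $\add = \omega_1$). Theorem \ref{t43} produces, in $V^{\bb{P}_{\omega_1}}$, a family $\cal{A} = \{A_{\delta}: \delta < \omega_1 \text{ indecomposable}\}$ of $\omega_1$-many sets which are positive (here: unbounded in $\omega_1$) such that no countable set meets all of them. But the forcing $\bb{P}_{\kappa}$ restricted to coordinates below $\omega_1$ embeds completely into $\bb{P}_{\kappa}$ --- in fact $\bb{P}_{\kappa} = \bb{P}_{\omega_1} \times (\text{rest})$ as a finite support product, and the "rest" is again ccc --- so the family $\cal{A}$ constructed in $V^{\bb{P}_{\omega_1}} \subseteq V^{\bb{P}_{\kappa}}$ retains the property that no \emph{new} countable subset of $\omega_1$ meets all of them, by a ccc absoluteness/genericity argument identical to the one in the proof of Theorem \ref{t43} (every countable $X$ in the extension is covered by one in the intermediate model, and the "diagonalizing" condition still exists). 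The separating-family reformulation in (b) is just the contrapositive: $\cal{F} = \{\omega_1 \setminus A_{\delta}\}$... no --- more precisely, the complements of the $A_\delta$ inside the relevant coordinate give, for each countable $X$ and each $\alpha \notin X$, a set in the family containing $\alpha$ and missing $X$; this is the standard dictionary between "$\omega_1$ sets with no countable transversal-meeter" and "separating family of size $\omega_1$" used in \cite{HLRS, KR}, and I would cite it rather than re-derive it. Alternatively, the $A_\delta$'s themselves, read off from the generic $\bb{Q}^{\kappa}_\delta$'s, visibly form such a family: each $A_\delta$ is designed to avoid any prescribed countable set of bounded order type $< \delta$.

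For clause (c): This is immediate from (b) together with the cited results relating separating families on $\omega_1$ to the order dimension of the Turing degrees: a separating family of size $\omega_1$ (clause (b)) gives an upper bound of $\omega_1$ on the order dimension, and the lower bound $\omega_1$ is unconditional (the Turing degrees have uncountable order dimension in ZFC, or follows from \cite{HLRS, KR}); hence the order dimension is exactly $\omega_1$. For clause (d): Suppose toward a contradiction that in $V^{\bb{P}_{\kappa}}$ there is an atomless supersaturated ideal $\cal{J}$ on some set $X$. Restricting to an atom-free positive part and using that supersaturated ideals are $\omega_1$-saturated, one reduces (exactly as in Lemma \ref{l32} / the proof of Theorem \ref{t45}(b)) to a $\mu$-additive supersaturated ideal $\cal{K}$ on $\mu = \add(\cal{K})$ with $\omega_1 \le \mu \le \cont = \kappa$; atomlessness forces $\mu$ to be uncountable and in fact weakly inaccessible, so $\mu = \kappa$ (as in Theorem \ref{t45}, using that $\kappa$ was already measurable in $V$ --- here we don't even need "least measurable", just that there is no measurable strictly between $\omega_1$ and $\kappa$, but if we are not assuming $\kappa$ least we should instead argue that $\mu$ being measurable in $V$ and $\le \kappa$ forces $\mu=\kappa$ only under the leastness hypothesis; so for the theorem as stated with merely "$\kappa$ measurable" I would argue directly: any atomless positive restriction of $\cal{J}$ would have additivity $\le \cont = \kappa$, and Theorem \ref{t43} applied inside $V^{\bb{P}_{\kappa}}$ --- rather, factoring $\bb{P}_{\kappa}$ --- already killed supersaturation of the bounded ideal, and more generally of any $\omega_1$-saturated ideal $\cal{I}$ on any $\lambda \ge \omega_1$ with $\add(\cal{I}) \le \omega_1$; one then bootstraps: by Fact \ref{f44}-style arguments and the $\mu = \add(\cal{K})$ normalization, $\cal{K} \res B$ for suitable $B$ is generated by an $\omega_1$-saturated ideal already present before stage-$\omega_1$-worth of the product, and Theorem \ref{t43} gives a counterexample family of size $\omega_1$, contradicting supersaturation of $\cal{K}$). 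The cleanest route: show directly that $\bb{P}_{\kappa}$, being a finite-support product over $\omega_1$ including a copy of every $\bb{Q}^{\omega_1}_\delta$, makes the bounded ideal on $\omega_1$ non-supersaturated (Theorem \ref{t43} with $\kappa=\lambda=\omega_1$, noting $\bb{P}_{\omega_1} \lessdot \bb{P}_\kappa$), hence no atomless ideal can be supersaturated because any such would, after restriction, project onto a supersaturated $\mu$-additive ideal on $\mu \le \kappa$ whose bounded-type counterexample family survives.

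\medskip

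\noindent\textbf{Main obstacle.} The delicate point is clause (d): making the reduction from "an arbitrary atomless supersaturated ideal $\cal{J}$ in $V^{\bb{P}_{\kappa}}$" down to "the bounded ideal on $\omega_1$ (or a $\kappa$-complete prime ideal) to which Theorem \ref{t43} applies" is not automatic, because the counterexample family of Theorem \ref{t43} lives on a specific positive set and one must trace through the Fact \ref{f44} decomposition and the factoring $\bb{P}_{\kappa} \cong \bb{P}_{\omega_1} \times \bb{P}'$ to see that the $\omega_1$-sized bad family is not destroyed by the rest of the product. The genericity argument at the end of the proof of Theorem \ref{t43} (producing, below any condition, an extension with a coordinate constantly $0$ on a prescribed countable set of small order type) is exactly what makes this go through, but checking that it interacts correctly with the intermediate ideals $\cal{K}, \cal{K}', \cal{K}''$ of the reduction --- i.e. that one may choose the stage $\gamma$ so that the relevant positive set $B$ and the counterexample both appear and persist --- mirrors the bookkeeping in the proof of Theorem \ref{t45}(b), and that is where the real work lies.
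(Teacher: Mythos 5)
Your clauses (a) and (c) match the paper's proof, but there are genuine gaps in (b) and (d). For (b), the paper does not route through Theorem \ref{t43} at all: it simply defines, for each indecomposable $\delta < \omega_1$, the generic set $S_{\delta} = \{\alpha < \kappa: (\exists p \in G)(\delta \in \dom(p) \wedge p(\delta)(\alpha) = 1)\}$, uses the ccc to cover any countable $X$ of the extension by a ground-model countable $Y \subseteq \kappa \setminus \{\alpha\}$, and then checks that the set of conditions which, at some coordinate $\delta$, put a $1$ at $\alpha$ and $0$ on all of $Y$ is dense (take indecomposable $\delta > \otp(Y)$). Your main route --- Theorem \ref{t43} with $\kappa = \lambda = \omega_1$ for the bounded ideal, plus a ``dictionary'' with separating families --- fails on two counts: it produces a family on $\omega_1$ rather than on $\kappa$, and the dictionary goes the wrong way (applying ``no countable set meets every $A_{\delta}$'' to $X \cup \{\alpha\}$ yields a $\delta$ with $\alpha \notin A_{\delta}$, the opposite of what a separating family demands; the separating property needs the extra freedom to force $\alpha$ \emph{into} the generic set while keeping $Y$ out, which is exactly the density argument above). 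Your closing ``alternatively, the $A_{\delta}$'s themselves\dots'' is the right idea but is never carried out, and in particular the reduction of $X$ to a ground-model $Y$ is missing.

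The larger gap is (d), where you miss the point that (b) is proved precisely in order to prove (d). The paper reduces, as in the first two paragraphs of the proof of Theorem \ref{t45}, to a $\mu$-additive supersaturated ideal on some $\mu \leq \cont = \kappa$, then restricts the separating family of (b) to $\mu$, obtaining a separating family of size $\omega_1$ on $\mu$, and invokes Lemma 4.2 of \cite{KR} for the contradiction. This entirely sidesteps both the ``least measurable'' hypothesis (which, as you correctly observe, is not available here) and the factoring of $\bb{P}_{\kappa}$ through intermediate models that you attempt. Your substitute argument --- bootstrapping from Theorem \ref{t43} applied to the bounded ideal via $\bb{P}_{\omega_1} \lessdot \bb{P}_{\kappa}$ and a Fact \ref{f44}-style decomposition --- is left incomplete (you say as much in your final paragraph), and I do not see how to close it without the separating-family lemma: an arbitrary atomless supersaturated ideal in $V^{\bb{P}_{\kappa}}$ need not be generated by a ground-model ideal of the form Theorem \ref{t43} requires, which is exactly the obstruction Theorem \ref{t45} overcomes only by using the leastness of $\kappa$.
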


\begin{proof}

(a) Since $\bb{Q}^{\kappa}_{\omega}$ adds $\kappa$ Cohen reals, $\cont \geq \kappa$. The other inequality follows by a name counting argument using the fact that $|\bb{P}_{\kappa}| = \kappa$. That the ideal generated by $\cal{I}$ is a normal $\omega_1$-saturated ideal on $\kappa$ follows from the fact that $\bb{P}_{\kappa}$ is ccc. \\

(b) For each indecomposable $\delta < \omega_1$, define $$S_{\delta} = \{\alpha < \kappa: (\exists p \in G_{\bb{P}_{\kappa}})(\delta \in \dom(p) \wedge p(\delta)(\alpha) = 1)\}$$

Let $\cal{F} = \{S_{\delta}: \delta < \omega_1 \text{ is indecomposable}\}$. Suppose $X \subseteq \kappa$ is countable and $\alpha \in \kappa \setminus X$. We'll find an $S_{\delta} \in \cal{F}$ such that $\alpha \in S_{\delta}$ and $X \cap S_{\delta} = \emptyset$. Since $\bb{P}_{\kappa}$ is ccc, we can find a countable $Y \in V$ such that $X \subseteq Y \subseteq \kappa \setminus \{\alpha\}$. Now an easy density argument shows that the set $$D_{\alpha, Y} = \{p \in \bb{P}_{\kappa}:(\exists \delta \in \dom(p))[p(\delta)(\alpha) = 1 \wedge (\forall \beta \in Y)(p(\delta)(\beta) = 0)]\}$$

is dense in $\bb{P}_{\kappa}$. So we can choose $p \in D_{\alpha, Y} \cap G_{\bb{P}_{\kappa}}$. Let $\delta$ witness that $p \in D_{\alpha, Y}$. Then it is clear that $\alpha \in S_{\delta}$ and $X \cap S_{\delta} \subseteq Y \cap S_{\delta} = \emptyset$. \\

(c) This follows from Theorem 3.9 in \cite{HLRS} and part (b) above. \\

(d) Suppose not. Then arguing as in first two paragraphs of the proof of Theorem \ref{t45} above, we can find some $\mu \leq \cont = \kappa$ and a $\mu$-additive supersaturated ideal on $\mu$. Define $\cal{E} = \{S \cap \mu: S \in \cal{F}\}$ where $\cal{F}$ is as in part (b). Then $|\cal{E}| = \omega_1$ and for every countable $X \subseteq \mu$ and $\alpha \in \mu \setminus X$, there exists $S \in \cal{E}$ such that $\alpha \in S$ and $S \cap X = \emptyset$. Now applying Lemma 4.2 in \cite{KR} gives us a contradiction.
\end{proof}

We conclude with the following questions.

\begin{itemize}

\item[(1)] Suppose $\cal{I}, \cal{J}$ are normal ideals on $\kappa$, $\cal{I}$ is supersaturated and $\cal{P}(\kappa) \slash \cal{I}$ is isomorphic to $\cal{P}(\kappa) \slash \cal{J}$. Must $\cal{J}$ be supersaturated?

\item[(2)] Suppose $\kappa$ is regular uncountable, $\cal{I}$ is a $\kappa$-complete normal ideal on $\kappa$ and $\cal{P}(\kappa) \slash \cal{I}$ is a Cohen algebra. Must $\cal{I}$ be supersaturated?

\item[(3)] Do $\sigma$-finite/bounded-cc forcings preserves supersaturation? What about Boolean algebras that admit a strictly positive finitely additive measure?

\end{itemize}

\end{document}